\providecommand{\U}[1]{\protect\rule{.1in}{.1in}}
\newtheorem{theorem}{Theorem}
\theoremstyle{plain}
\newtheorem{corollary}{Corollary}
\newtheorem{definition}{Definition}
\newtheorem{lemma}{Lemma}
\newtheorem{proposition}{Proposition}
\numberwithin{equation}{section}
\newcommand{\beqa}{\begin{eqnarray*}}
\newcommand{\eeqa}{\end{eqnarray*}}
\def\<{\left<}
\def\>{\right>}
\def\mv1{M_v^1}
\def\Ren{\mathbb{R}^d}
\def\Sn2{S_{2}(L^{2}(\Ren))}
\def\S1{S_{1}(L^{2}(\Ren))}
\def\sig00{\sigma_{0,0}}
\begin{document}

\title[Born-Jordan Pseudo-Differential Operators]{Born-Jordan Pseudo-Differential Operators with Symbols in the Shubin Classes}
\author{Elena Cordero}
\address{Dipartimento di Matematica, Universit\`a di Torino, Dipartimento di
Matematica, via Carlo Alberto 10, 10123 Torino, Italy}
\email{elena.cordero@unito.it}
\author{Maurice de Gosson}
\address{University of Vienna, Faculty of Mathematics (NuHAG), Oskar-Morgenstern-Platz
1, 1090 Vienna}
\email{maurice.de.gosson@univie.ac.at}
\author{Fabio Nicola}
\address{Dipartimento di Scienze Matematiche, Politecnico di Torino, corso Duca degli
Abruzzi 24, 10129 Torino, Italy}
\email{fabio.nicola@polito.it}
\date{}
\subjclass[2000]{ Primary 35S05, Secondary 46L65}
\keywords{Born-Jordan quantization, pseudodifferential operators, Shubin classes, Weyl quantization}

\begin{abstract}
We apply Shubin's theory of global symbol classes $\Gamma_{\rho}^{m}$ to the
Born-Jordan pseudodifferential calculus we have previously developed. This
approach has many conceptual advantages, and makes the relationship between
the conflicting Born-Jordan and Weyl quantization methods much more limpid.
We give, in particular, precise asymptotic expansions of symbols allowing to
pass from Born-Jordan quantization to Weyl quantization, and vice-versa. In
addition we state and prove some regularity and global hypoellipticity results.

\end{abstract}
\maketitle


\section{Introduction}

The Born-Jordan quantization rules \cite{bj,bjh,Cohen1} have recently been
rediscovered in mathematics and have quickly become a very active area of
research under the impetus of scientists working in signal theory and
time-frequency analysis \cite{bogetal,bodeol2013,Cohenbook,cogoni16b}. It has been
realized, not only that the associated phase space picture has many advantages
compared with the usual Weyl--Wigner picture (it allows a strong damping of
unwanted interference patterns \cite{bogetal,cogoni16a}), but also, as one of
us has shown \cite{Springer,FPBJ,PRBJ}, that there is strong evidence that
Born-Jordan quantization might very well be the correct quantization method
in quantum physics. Independently of these potential applications, the
Born-Jordan pseudo-differential calculus has many interesting and difficult
features (some of them, as non-injectivity \cite{cogoni15}, being even quite
surprising) and deserve close attention. The involved mathematics is less
straightforward than that of the usual Weyl formalism; for instance
Born-Jordan pseudo-differential calculus is not fully covariant under linear
symplectic transformations \cite{TRANSAM}, which makes the study of the
symmetries of the operators much less straightforward than in the Weyl case.

In the present paper we set out to study the
pseudo-differential calculus associated with Born-Jordan quantization in the framework of
Shubin's \cite{Shubin} global symbol classes. These results complement and
extend those obtained by us in \cite{cogoni15}. \par
To be precise, in the Weyl quantization scheme to any observable (symbol) $a(z)$, $z\in \mathbb{R}^{2n}$, defined as a function or (temperate) distribution in phase space, it is associated the Weyl operator
\begin{equation*}
\widehat{A}_{\mathrm{W}}=\left(  \tfrac{1}{2\pi\hbar}\right)  ^{n}\int
a_{\sigma}(z)\widehat{T}(z)d^{2n}z 
\end{equation*}
where $a_{\sigma}=F_{\sigma}a$ is the symplectic Fourier transform of $a$
and $\widehat{T}(z_{0})$ is the
Heisenberg operator given by
\begin{equation*}
\widehat{T}(z_{0})\psi(x)=e^{\tfrac{i}{\hslash}(p_{0}x-\tfrac{1}{2}p_{0}%
x_{0})}\psi(x-x_{0}).
\end{equation*}
This is simply a phase space shift and, as a consequence of the Schwartz kernel theorem, every continuous linear operator $\mathcal{S}(\mathbb{R}^{n})\to \mathcal{S}'(\mathbb{R}^{n})$ can be written in a unique way as a Weyl operator for a suitable symbol $a\in\mathcal{S}'(\mathbb{R}^{2n})$; namely, it is a superposition of phase space shifts. In this functional framework the Weyl correspondence between observables and operators is therefore one to one. \par
The Born-Jordan quantization of a symbol $a(z)$ is instead defined  as 
\[
\widehat{A}_{\mathrm{BJ}}=\left(  \tfrac{1}{2\pi\hbar}\right)  ^{n}\int
a_{\sigma}(z)\operatorname{sinc}\left(  \tfrac{px}{2\hbar}\right)  \widehat
{T}(z)d^{2n}z
\]
with $z=(x,p)$ and $px=p\cdot x$. The presence of the function $\operatorname{sinc}\left(  \tfrac{px}{2\hbar}\right)$ and in particular its zeros make the corresponding quantization problem much more subtle. It was proved in \cite{cogoni15} that every linear continuous operator $\mathcal{S}(\mathbb{R}^{n})\to\mathcal{S}'(\mathbb{R}^{n})$ can still be written in Born-Jordan form, but the representation is no longer unique. The Born-Jordan correspondence is anyway still surjective.\par
 In this paper we continue this investigation by focusing on a particulary relevant subclass of  smooth symbols satisfying good growth conditions at infinity, namely Shubin's classes \cite{Shubin}. Roughly speaking the main result reads as follows. Within such symbol classes the Weyl symbol $a_{\rm W}$ and the corresponding Born-Jordan symbol $a_{\rm BJ}$ are related by the following explicit asymptotic expansions: 
 \[
a_{\rm W}(x,p)\sim\sum_{\alpha\in\mathbb{N}^{n}\atop |\alpha|\ {\rm even}}\frac{1}{\alpha!(|\alpha|+1)}\left(
\frac{i\hbar}{2}\right)  ^{|\alpha|}\partial_{x}^{\alpha}\partial_{p}^{\alpha
}a_{\rm BJ}(x,p)%
\]
and
\[
a_{\rm BJ}(x,p)\sim\sum_{\alpha\in\mathbb{N}^{n}\atop |\alpha|\ {\rm even}}\frac{c_{\alpha}}{\alpha!}\left(
\frac{i\hbar}{2}\right)  ^{|\alpha|}\partial_{x}^{\alpha}\partial_{p}^{\alpha
}a_{\rm W}(x,p)%
\]
for suitable coefficients $c_\alpha$ (see \eqref{calpha} below).\par
These expansions seem remarkable, because at present there is no an exact and explicit formula for the Born-Jordan symbol corresponding to a given Weyl operator, although the {\it existence} of such a symbol was proved in \cite{cogoni15}. Indeed, the situation seems definitely similar to what happens in the division problem of temperate distributions by a (not identically zero) polynomial $P$: the map $f\mapsto Pf$ from $\mathcal{S}'(\mathbb{R}^{n})$ into itself is onto but in general does not exist a linear continuous right inverse \cite{bonet,lang}.\par
We refer to \cite{Treschev} for an alternative formulation of quantum observables by formal series of noncommutative generators $\widehat{x}, \widehat{p}$.
\medskip

We will systematically use properties of the global pseudo-differential
calculus whose study was initiated by Shubin, after related work by Beals,
Berezin, Kumano-go, Rabinovi\v{c}, and others (see the Bibliography in
\cite{Shubin}). This calculus plays an important role in quantum mechanics
since the position and momentum variables are placed on an equal footing in
the estimates defining the symbol classes. 
We have found this approach particularly well adapted to investigate
asymptotic expansions such as those for $a_{W}$ and $a_{BJ}$. 

Natural related topics that we have not included in this work are the spectral theory of Born-Jordan operators, in which the notion of global hypoellipticity plays a crucial role and the anti-Wick version of these operators (the latter might lead to some new insights). Finally, we have not discussed at all the
Wigner--Moyal formalism associated with Born-Jordan question; for the latter
we refer to \cite{bogetal,cogoni16a,Springer}.

 \par\medskip
In short, the paper is organized as follows. In Section 2 we review the definition of the Born-Jordan pseudodifferential operators. Section 3 is devoted to Shubin's symbol classes. In Section 4 we prove the above relationships between Weyl and Born-Jordan symbol. Finally Section 5 is devoted to applications to the global regularity problem.\par\medskip

{\bf Notation.}
We denote by $\sigma$ the standard symplectic form $\sum_{j=1}^{n}dp_{j}\wedge
dx_{j}$ on the phase space $\mathbb{R}^{2n}\equiv\mathbb{R}^{n}\times
\mathbb{R}^{n}$; the phase space variable is written $z=(x,p)$. Equivalently,
$\sigma(z,z^{\prime})=Jz\cdot z^{\prime}$ where $J=%
\begin{pmatrix}
0 & I\\
-I & 0
\end{pmatrix}
$. We will denote by $\widehat{x}_{j}$ the operator of multiplication by
$x_{j}$ and set $\widehat{p}_{j}=-i\hbar\partial/\partial x_{j}$. These
operators satisfy Born's canonical commutation relations $[\widehat{x}%
_{j},\widehat{p}_{j}]=i\hbar$ where $\hbar$ is a positive parameter such that
$0<\hbar$ $\leq1$.
\par\medskip

\section{Born-Jordan pseudo-differential Operators\label{sec1}}

In this section we review the recent advances in the theory of Born-Jordan
quantization; for proofs and details we refer to Cordero et al.
\cite{cogoni15}, de Gosson \cite{TRANSAM,Springer,FPBJ}.

\subsection{The Born-Jordan quantization rules}

Following Heisenberg's insightful work on \textquotedblleft matrix
mechanics\textquotedblright\ Born and Jordan \cite{bj,bjh} proposed the
quantization rule%
\begin{equation}
p^{s}x^{r}\overset{\mathrm{BJ}}{\longrightarrow}\frac{1}{s+1}\sum_{\ell=0}%
^{s}\widehat{p}^{s-\ell}\widehat{x}^{r}\widehat{p}^{\ell} \label{bj1}%
\end{equation}
for monomials. Their rule conflicts with Weyl's \cite{Weyl} quantization rule,
leading to
\begin{equation}
p^{s}x^{r}\overset{\mathrm{W}}{\longrightarrow}\frac{1}{2^{s}}\sum_{\ell
=0}^{s}\binom{s}{\ell}\widehat{p}^{s-\ell}\widehat{x}^{r}\widehat{p}^{\ell},
\label{w2}%
\end{equation}
(McCoy rule \cite{mccoy}) as soon as $r\geq2$ and $s\geq2$. The following
observation is crucial: both quantizations are obtained from Shubin's $\tau
$-rule
\begin{equation}
p^{s}x^{r}\overset{\tau}{\longrightarrow}\sum_{\ell=0}^{s}\binom{s}{\ell
}(1-\tau)^{\ell}\tau^{s-\ell}\widehat{p}^{s-\ell}\widehat{x}^{r}\widehat
{p}^{\ell} \label{tau1}%
\end{equation}
but by very different means. In fact, the Weyl rule (\ref{w2}) is directly
obtained by choosing $\tau=\frac{1}{2}$ while Born and Jordan's rule (\ref{bj1}) is
obtained  by averaging the right-hand side of (\ref{tau1}) with
respect to $\tau$ over the interval $[0,1]$ (de Gosson and Luef \cite{golu1},
de Gosson \cite{TRANSAM,Springer}).

On the operator level, the Weyl operator $\widehat{A}_{\mathrm{W}%
}=\operatorname*{Op}_{\mathrm{W}}(a)$ is given by the familiar formula due to
Weyl himself \cite{Weyl}
\begin{equation}
\widehat{A}_{\mathrm{W}}=\left(  \tfrac{1}{2\pi\hbar}\right)  ^{n}\int
a_{\sigma}(z)\widehat{T}(z)d^{2n}z \label{AW1}%
\end{equation}
where $a_{\sigma}=F_{\sigma}a$ is the symplectic Fourier transform
\begin{equation}
a_{\sigma}(z)=\left(  \tfrac{1}{2\pi\hbar}\right)  ^{n}\int e^{-\frac{i}%
{\hbar}\sigma(z,z^{\prime})}a(z^{\prime})d^{2n}z^{\prime} \label{sympft}%
\end{equation}
and $\widehat{T}(z_{0})=e^{-\frac{i}{\hslash}\sigma(\hat{z},z_{0})}$ is the
Heisenberg operator; recall \cite{Birk,75} that the action of $\widehat
{T}(z_{0})$ on a function or distribution $\psi$ is explicitly given by%
\begin{equation}
\widehat{T}(z_{0})\psi(x)=e^{\tfrac{i}{\hslash}(p_{0}x-\tfrac{1}{2}p_{0}%
x_{0})}\psi(x-x_{0}). \label{heiwe}%
\end{equation}
Let us underline that the parameter $\hslash\in(0,1]$ is fixed in our context.
Here we are not interested in the semi-classical analysis, i.e.\ the asymptotic
as $\hslash\rightarrow0$.

Using Plancherel's identity formula (\ref{AW1}) can be rewritten%
\begin{equation}
\widehat{A}_{\mathrm{W}}=\left(  \tfrac{1}{\pi\hbar}\right)  ^{n}\int
a(z)\widehat{\Pi}(z)d^{2n}z \label{awgr1}%
\end{equation}
where
\begin{equation}
\widehat{\Pi}(z)=\widehat{T}(z)\widehat{\Pi}\widehat{T}(z)^{-1}
\label{groroydef}%
\end{equation}
is the Grossmann--Royer reflection operator (where $\widehat{\Pi}\psi
(x)=\psi(-x)$). One verifies that under suitable convergence conditions (for
instance $a\in\mathcal{S}(\mathbb{R}^{2n})$ and $\psi\in L^{1}(\mathbb{R}%
^{n})$) one recovers the more familiar \textquotedblleft mid-point
formula\textquotedblright%
\begin{equation}
\widehat{A}_{\mathrm{W}}\psi(x)=\left(  \tfrac{1}{2\pi\hbar}\right)  ^{n}\iint
e^{\frac{i}{\hbar}p(x-y)}a(\tfrac{1}{2}(x+y),p)\psi(y)d^{n}yd^{n}p
\label{weylop}%
\end{equation}
common in the theory of pseudo-differential operators; we will use this
notation as a formal tool for the sake of clarity (keeping in mind that it can
be given a rigorous meaning by (\ref{awgr1})). The easiest way to define
Shubin's $\tau$-operator $\widehat{A}_{\tau}=\operatorname*{Op}_{\tau}(a)$ is
to use the formula above as a starting point, and to replace the midpoint
$\tfrac{1}{2}(x+y)$ with $(1-\tau)x+\tau y$ which leads to
\begin{equation}
\widehat{A}_{\tau}\psi(x)=\left(  \tfrac{1}{2\pi\hbar}\right)  ^{n}\iint
e^{\frac{i}{\hbar}p(x-y)}a((1-\tau)x+\tau y,p)\psi(y)d^{n}yd^{n}p.
\label{shubinop}%
\end{equation}
As in the monomial case, the Born-Jordan operator $\widehat{A}_{\mathrm{BJ}%
}=\operatorname*{Op}_{\mathrm{BJ}}(a)$ is obtained by averaging
(\ref{shubinop}) over $[0,1]$:%
\begin{equation}
\widehat{A}_{\mathrm{BJ}}=\int_{0}^{1}\widehat{A}_{\tau}d\tau. \label{2.10bis}%
\end{equation}

\subsection{Harmonic representation of Born-Jordan operators}

The following result gives an explicit expression of the Weyl symbol of a
Born-Jordan operator with arbitrary symbol (see \cite{bogetal,cogoni15}):

\begin{proposition}
Let $a\in\mathcal{S}^{\prime}(\mathbb{R}^{2n})$. (i) The operator $\widehat
{A}_{\mathrm{BJ}}=\operatorname*{Op}_{\mathrm{BJ}}(a)$ is the Weyl operator
$\operatorname*{Op}_{\mathrm{W}}(b)$ where%
\begin{equation}
b(x,p)=\left(  \tfrac{1}{2\pi\hbar}\right)  ^{n}(a\ast\theta)(x,p)
\label{baxp}%
\end{equation}
where $\theta\in S^{\prime}(\mathbb{R}^{2n})$ is the distribution whose
(symplectic) Fourier transform is
\begin{equation}
\theta_{\sigma}(x,p)=\operatorname{sinc}\left(  \frac{px}{2\hbar}\right)  .
\label{thetasigma}%
\end{equation}
(ii) The restriction of $\widehat{A}_{\mathrm{BJ}}$ to monomials $p_{j}%
^{s}x_{j}^{r}$ is given by the Born-Jordan rule (\ref{bj1}).
\end{proposition}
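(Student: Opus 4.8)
The plan is to prove (i) by reducing each Shubin $\tau$-operator to a Weyl operator via an explicit Fourier multiplier, then averaging over $\tau\in[0,1]$; part (ii) will follow from a Beta-function computation.

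First I would determine the Weyl symbol $b_\tau$ of $\operatorname*{Op}_\tau(a)$. Reading (\ref{shubinop}) as an integral operator, its kernel is
\[
K_\tau(x,y)=\left(\tfrac{1}{2\pi\hbar}\right)^n\int e^{\frac{i}{\hbar}p(x-y)}a((1-\tau)x+\tau y,p)\,d^np,
\]
and I extract the Weyl symbol by inverting the mid-point formula (\ref{weylop}), namely $b_\tau(w,p)=\int e^{-\frac{i}{\hbar}p\,u}K_\tau(w+\tfrac{u}{2},w-\tfrac{u}{2})\,d^nu$. Substituting $x=w+u/2$, $y=w-u/2$ turns the position argument of $a$ into $w+(\tfrac12-\tau)u$, and carrying this onto the symplectic Fourier side yields the clean multiplier relation
\[
(b_\tau)_\sigma(x,p)=e^{\frac{i}{\hbar}(\tau-\frac12)px}\,a_\sigma(x,p),
\]
with $a_\sigma=F_\sigma a$ as in (\ref{sympft}) (this identity is also recorded in \cite{cogoni15,golu1}). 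Establishing the multiplier form rigorously for $a\in\mathcal{S}'(\mathbb{R}^{2n})$ — interpreting the oscillatory integrals distributionally and justifying the symbol extraction — is the technical heart of part (i); everything afterward is bookkeeping. Note that, since $\operatorname{sinc}$ is even, the final answer is insensitive to the sign of this phase.

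Next I would average over $\tau$. As $\operatorname*{Op}_{\mathrm W}$ is linear and continuous and (\ref{2.10bis}) reads $\widehat A_{\mathrm{BJ}}=\int_0^1\widehat A_\tau\,d\tau$, I may pass the $\tau$-integral to the symbol and, on the symplectic Fourier side, through the multiplier:
\[
b_\sigma(x,p)=\int_0^1 (b_\tau)_\sigma(x,p)\,d\tau=a_\sigma(x,p)\int_0^1 e^{\frac{i}{\hbar}(\tau-\frac12)px}\,d\tau.
\]
Writing $c=\frac{i}{\hbar}px$, the elementary integral gives $\int_0^1 e^{c(\tau-1/2)}\,d\tau=\frac{e^{c/2}-e^{-c/2}}{c}=\operatorname{sinc}\!\left(\tfrac{px}{2\hbar}\right)$, which is exactly the factor $\theta_\sigma$ of (\ref{thetasigma}); hence $b_\sigma=a_\sigma\theta_\sigma$. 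Finally I invert: applying $F_\sigma$ (involutive with the normalization (\ref{sympft})) and using the symplectic convolution identity $F_\sigma(f\ast g)=(2\pi\hbar)^nF_\sigma f\,F_\sigma g$, the pointwise product $a_\sigma\theta_\sigma$ transforms back into $\left(\tfrac{1}{2\pi\hbar}\right)^n(a\ast\theta)$, which is precisely (\ref{baxp}). The only points needing care are the convergence of the $\tau$-average in $\mathcal{S}'$ and the tracking of the $2\pi\hbar$ constants.

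For part (ii) I would start from the $\tau$-ordering rule (\ref{tau1}), which gives $\operatorname*{Op}_\tau$ on the monomial $p_j^sx_j^r$, and integrate it termwise against $d\tau$ over $[0,1]$ as dictated by (\ref{2.10bis}). The coefficient of $\widehat p^{\,s-\ell}\widehat x^{\,r}\widehat p^{\,\ell}$ becomes the Beta integral
\[
\binom{s}{\ell}\int_0^1(1-\tau)^\ell\tau^{s-\ell}\,d\tau=\binom{s}{\ell}\frac{\ell!\,(s-\ell)!}{(s+1)!}=\frac{1}{s+1},
\]
independent of $\ell$, which collapses (\ref{tau1}) into the Born-Jordan rule (\ref{bj1}). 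The only subtlety is justifying that $\operatorname*{Op}_\tau$ acts on the polynomial symbol $p_j^sx_j^r$ exactly as the ordering (\ref{tau1}) predicts; this is the defining property of the Shubin $\tau$-calculus on monomials and can be verified directly from (\ref{shubinop}).
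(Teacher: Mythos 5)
Your argument is correct and is essentially the standard one that the paper relies on (it defers the proof to \cite{bogetal,cogoni15}): the Weyl symbol of $\operatorname*{Op}_{\tau}(a)$ is obtained from $a$ by the symplectic-Fourier multiplier $e^{\frac{i}{\hbar}(\tau-\frac12)px}$, averaging over $\tau\in[0,1]$ produces exactly $\operatorname{sinc}\left(\tfrac{px}{2\hbar}\right)=\theta_\sigma$, and the convolution identity $F_\sigma(a\ast\theta)=(2\pi\hbar)^n a_\sigma\theta_\sigma$ gives (\ref{baxp}), while the Beta integral $\binom{s}{\ell}\int_0^1(1-\tau)^\ell\tau^{s-\ell}\,d\tau=\frac{1}{s+1}$ collapses (\ref{tau1}) to (\ref{bj1}). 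All constants and signs check out, so no gap beyond the distributional care you already flag.
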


Recall that the function $\operatorname{sinc}$ is defined by
$\operatorname{sinc}u=\sin u/u$ for $u\neq0$ and $\operatorname{sinc}0=1$.

It follows from (\ref{AW1}) and the convolution formula $F_{\sigma}%
(a\ast\theta)=(2\pi\hbar)^{n}a_{\sigma}\theta_{\sigma}$that $\widehat
{A}_{\mathrm{BJ}}$ is alternatively given by
\begin{equation}
\widehat{A}_{\mathrm{BJ}}=\left(  \tfrac{1}{2\pi\hbar}\right)  ^{n}\int
a_{\sigma}(z)\operatorname{sinc}\left(  \tfrac{px}{2\hbar}\right)  \widehat
{T}(z)d^{2n}z\label{abjharmonic}%
\end{equation}
(cf. formula (\ref{AW1}) for Weyl operators).

\section{Symbol Classes}

In what follows we use the notation $\left\langle u\right\rangle
=\sqrt{1+|u|^{2}}$ for\ $u\in\mathbb{R}^{m}$. For instance, if $z=(x,p)\in
\mathbb{R}^{2n}$ then%
\[
\left\langle z\right\rangle =\sqrt{1+|z|^{2}}=\sqrt{1+|x|^{2}+|p|^{2}}.
\]
We assume that the reader is familiar with multi-index notation: if
$u=(u_{1},...,u_{m})\in\mathbb{R}^{m}$ and $\alpha=(\alpha_{1},...,\alpha
_{m})\in\mathbb{N}^{m}$ we write $u^{\alpha}=u_{1}^{\alpha_{1}}\cdot\cdot\cdot
u_{m}^{\alpha_{m}}$; similarly $\partial_{u}^{\alpha}=\partial_{u_{1}}%
^{\alpha_{1}}\cdot\cdot\cdot\partial_{u_{m}}^{\alpha_{m}}$. By definition
$|\alpha|=\alpha_{1}+\cdot\cdot\cdot+\alpha_{m}$ and $\alpha!=\alpha_{1}%
!\cdot\cdot\cdot\alpha_{m}!$. \ 

\subsection{The Shubin symbol class $\Gamma_{\rho}^{m}$}

We begin by giving the following definition (Shubin \cite{Shubin}, Definition 23.1):

\begin{definition}
Let $m\in\mathbb{R}$ and $0<\rho\leq1$. The symbol class $\Gamma_{\rho}%
^{m}(\mathbb{R}^{2n})$ consists of all complex functions $a\in C^{\infty
}(\mathbb{R}^{2n})$ such that for every $\alpha\in\mathbb{N}^{2n}$ there
exists a constant $C_{\alpha}\geq0$ with
\begin{equation}
|\partial_{z}^{\alpha}a(z)|\leq C_{\alpha}\left\langle z\right\rangle
^{m-\rho|\alpha|}\text{ \ for }z\in\mathbb{R}^{2n}. \label{est1}%
\end{equation}

\end{definition}

It immediately follows from this definition that if $a\in\Gamma_{\rho}%
^{m}(\mathbb{R}^{2n})$ and $\alpha\in\mathbb{N}^{2n}$ then $\partial
_{z}^{\alpha}a\in\Gamma_{\rho}^{m-\rho|\alpha|}(\mathbb{R}^{2n})$; using Leibniz's
rule for the derivative of products of functions one easily checks that
\begin{equation}
a\in\Gamma_{\rho}^{m}(\mathbb{R}^{2n})\text{ \textit{and} }b\in\Gamma_{\rho
}^{m^{\prime}}(\mathbb{R}^{2n})\Longrightarrow ab\in\Gamma_{\rho}%
^{m+m^{\prime}}(\mathbb{R}^{2n}). \label{gammatau2}%
\end{equation}

The class $\Gamma_{\rho}^{m}(\mathbb{R}^{2n})$ is a complex vector space for
the usual operations of addition and multiplication by complex numbers, and we
have
\begin{equation}
\Gamma_{\rho}^{-\infty}(\mathbb{R}^{2n})=\bigcap\nolimits_{m\in\mathbb{R}%
}\Gamma_{\rho}^{m}(\mathbb{R}^{2n})=\mathcal{S}(\mathbb{R}^{2n}).
\label{gammaminf}%
\end{equation}

The reduced harmonic oscillator Hamiltonian $H(z)=\frac{1}{2}(|x|^{2}%
+|p|^{2})$ obviously belongs to $\Gamma_{1}^{2}(\mathbb{R}^{2n})$, and so does%
\[
H(z)=\sum_{j=1}^{n}\frac{1}{2m_{j}}(p_{j}^{2}+m_{j}^{2}\omega_{j}^{2}x_{j}%
^{2});
\]
in fact, any polynomial function in $z$ of degree $m$ is in $\Gamma_{1}%
^{m}(\mathbb{R}^{2n})$. In particular every Hamiltonian function of the type
\[
H(z)=\sum_{j=1}^{n}\frac{1}{2m_{j}}p_{j}^{2}+V(x)
\]
belongs to some class $\Gamma_{1}^{m}(\mathbb{R}^{2n})$ if the potential
function $V(x)$ is a polynomial of degree $m\geq 2$.

The following lemma shows that the symbol classes $\Gamma_{\rho}%
^{m}(\mathbb{R}^{2n})$ are invariant under linear automorphisms of phase space
(this property does not hold for the usual H\"{o}rmander classes
$S_{\rho,\delta}^{m}(\mathbb{R}^{n})$ \cite{69}, whose elements are
characterized by growth properties in only the variable $p$). Let us denote by
$GL(2n,\mathbb{R})$ the space of $2n\times2n$ invertible real matrices. Then

\begin{lemma}
\label{Lemma1}Let $a\in\Gamma_{\rho}^{m}(\mathbb{R}^{2n})$ and $M\in
GL(2n,\mathbb{R})$. We have $a(M \cdot)\in\Gamma_{\rho}^{m}(\mathbb{R}^{2n})$.
\end{lemma}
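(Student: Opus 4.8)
The plan is to reduce everything to the defining estimate \eqref{est1} for $a$, using two ingredients: the chain rule, which is especially simple here because $M$ acts linearly, and the comparability of the weights $\langle Mz\rangle$ and $\langle z\rangle$. Set $b(z)=a(Mz)$. Since $z\mapsto Mz$ is linear, a direct computation gives $\partial_{z_j}b(z)=\sum_{k=1}^{2n}M_{kj}(\partial_{z_k}a)(Mz)$, and iterating shows that for each $\alpha\in\mathbb{N}^{2n}$ the derivative $\partial_z^\alpha b(z)$ is a finite linear combination, with coefficients that are products of entries of $M$ (hence constants depending only on $M$ and $\alpha$), of terms $(\partial_z^\beta a)(Mz)$ with $|\beta|=|\alpha|$. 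In particular there is a constant $K_\alpha$, depending only on $M$ and $\alpha$, with
\[
|\partial_z^\alpha b(z)|\le K_\alpha\max_{|\beta|=|\alpha|}|(\partial_z^\beta a)(Mz)|.
\]
Inserting \eqref{est1} for each $\beta$ with $|\beta|=|\alpha|$ bounds the right-hand side by $K_\alpha\bigl(\max_{|\beta|=|\alpha|}C_\beta\bigr)\langle Mz\rangle^{m-\rho|\alpha|}$, since all the exponents $m-\rho|\beta|$ coincide with $m-\rho|\alpha|$. The whole problem is thus reduced to replacing $\langle Mz\rangle$ by $\langle z\rangle$.

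The key step is therefore the equivalence of the two weights. Because $M\in GL(2n,\mathbb{R})$, both $M$ and $M^{-1}$ are bounded, so there are constants $0<c\le C$ with $c|z|\le|Mz|\le C|z|$ for all $z\in\mathbb{R}^{2n}$ (one may take $c=\|M^{-1}\|^{-1}$ and $C=\|M\|$). Since $\langle u\rangle=\sqrt{1+|u|^2}$, this yields a two-sided comparison
\[
c'\langle z\rangle\le\langle Mz\rangle\le C'\langle z\rangle,\qquad z\in\mathbb{R}^{2n},
\]
with $c',C'>0$ depending only on $M$.

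The only point requiring a little care is that the exponent $s=m-\rho|\alpha|$ may be positive or negative: raising the previous display to the power $s$, one uses the upper comparison when $s\ge0$ and the lower one when $s<0$, obtaining in either case $\langle Mz\rangle^{s}\le C''_\alpha\langle z\rangle^{s}$ for a constant $C''_\alpha$ depending on $M,\alpha,m,\rho$. Combining this with the reduction above gives $|\partial_z^\alpha b(z)|\le \widetilde{C}_\alpha\langle z\rangle^{m-\rho|\alpha|}$ for every $\alpha$, which is exactly \eqref{est1} for $b=a(M\cdot)$; hence $a(M\cdot)\in\Gamma_\rho^m(\mathbb{R}^{2n})$. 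I do not expect any genuine obstacle: the substance of the lemma is precisely that the weight $\langle z\rangle$ is isotropic in the full phase-space variable $z$, so a linear bijection distorts it only by multiplicative constants — which is exactly what fails for the H\"ormander classes, whose weight involves only the $p$-variable.
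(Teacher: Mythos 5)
Your proof is correct and follows exactly the route the paper indicates: the paper's own (very terse) proof simply cites Shubin and notes that the claim reduces to the norm equivalence $C^{-1}|z|\le|Mz|\le C|z|$, which is precisely the key step you identify and carry out, together with the routine linear chain rule. You have merely filled in the details that the paper leaves implicit, including the correct case distinction on the sign of the exponent $m-\rho|\alpha|$.
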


\begin{proof}
The result is showed in greater generality in \cite[p. 177]{Shubin}. This
special case simply follows by the fact
\[
C^{-1}|z|\leq|Mz|\leq C|z|,
\]
for a suitable $C>0$.
\end{proof}

\subsection{Asymptotic expansions of symbols}

Let us recall the notion of asymptotic expansion of a symbol $a\in\Gamma
_{\rho}^{m}(\mathbb{R}^{2n})$ (cf. \cite{Shubin}, Definition 23.2):

\begin{definition}
\label{23.2}Let $(a_{j})_{j}$ be a sequence of symbols $a_{j}\in\Gamma_{\rho
}^{m_{j}}(\mathbb{R}^{2n})$ such that $\lim_{j\rightarrow+\infty}%
m_{j}\rightarrow-\infty$. Let $a$ $\in C^{\infty}(\mathbb{R}^{2n})$. If for
every integer $r\geq2$ we have%
\begin{equation}
a-\sum_{j=1}^{r-1}a_{j}\in\Gamma_{\rho}^{\overline{m}_{r}}(\mathbb{R}^{2n})
\label{23.4}%
\end{equation}
where $\overline{m}_{r}=\max_{j\geq r}m_{j}$ we will write $a\thicksim
\sum_{j=1}^{\infty}a_{j}$ and call this relation an asymptotic expansion of
the symbol $a$.
\end{definition}

The interest of the asymptotic expansion comes from the fact that every
sequence of symbols $(a_{j})_{j}$ with $a_{j}\in\Gamma_{\rho}^{m_{j}%
}(\mathbb{R}^{2n})$, the degrees $m_{j}$ being strictly decreasing and such
that $m_{j}\rightarrow-\infty$, determines a symbol in some $\Gamma_{\rho}%
^{m}(\mathbb{R}^{2n})$, that symbol being unique up to an element of
$\mathcal{S}(\mathbb{R}^{2n})$:

\begin{proposition}
\label{23.1.} Let $(a_{j})_{j}$ be a sequence of symbols $a_{j}\in\Gamma
_{\rho}^{m_{j}}(\mathbb{R}^{2n})$ such that $m_{j}>m_{j+1}$ and $\lim
_{j\rightarrow+\infty}m_{j}\rightarrow-\infty$. Then:

(i) \textit{There exists a function} $a$, \textit{such that} $a\thicksim
\sum\limits_{j=1}^{\infty}a_{j}$.

(ii) \textit{If another function} $a^{\prime}$ \textit{is such that
}$a^{\prime}\thicksim\sum\limits_{j=1}^{\infty}a_{j}$\textit{, then}
$a-a^{\prime}\in\mathcal{S}(\mathbb{R}^{2n})$.
\end{proposition}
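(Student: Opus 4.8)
The plan is to establish (i) by a Borel‑type summation, damping each $a_j$ near infinity with an excision function, and then to obtain (ii) by a one‑line subtraction argument using the identity $\Gamma_\rho^{-\infty}=\mathcal{S}(\mathbb{R}^{2n})$ of \eqref{gammaminf}. Concretely, fix $\chi\in C^\infty(\mathbb{R}^{2n})$ with $\chi(z)=0$ for $|z|\le 1$ and $\chi(z)=1$ for $|z|\ge 2$, and set $b_j(z)=\chi(z/t_j)\,a_j(z)$ for an increasing sequence $t_j\to+\infty$ to be chosen. Since $\partial^\beta[\chi(\cdot/t_j)]$ is supported in $\{t_j\le|z|\le 2t_j\}$ (where $\langle z\rangle\approx t_j$) and is $O(t_j^{-|\beta|})$, one has $\chi(\cdot/t_j)\in\Gamma_\rho^{0}$, so by \eqref{gammatau2} $b_j\in\Gamma_\rho^{m_j}$; moreover $b_j$ vanishes on $\{|z|\le t_j\}$, so on any fixed ball only finitely many $b_j$ are nonzero. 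Hence $a:=\sum_{j\ge1}b_j$ is a \emph{locally finite} sum and defines a function in $C^\infty(\mathbb{R}^{2n})$ with no convergence issue.

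The crux is the calibration of the radii $t_j$. On $\operatorname{supp}b_j\subseteq\{|z|\ge t_j\}$, Leibniz's rule together with \eqref{est1} gives $|\partial^\alpha b_j(z)|\le C_\alpha^{(j)}\langle z\rangle^{m_j-\rho|\alpha|}$, where $C_\alpha^{(j)}$ depends on $j$ through the seminorms of $a_j$. Because $|z|\ge t_j$ and $m_j-m_{j-1}<0$, we may write $\langle z\rangle^{m_j-m_{j-1}}\le t_j^{m_j-m_{j-1}}\to 0$; taking the maximum of $C_\alpha^{(j)}$ over the finitely many $\alpha$ with $|\alpha|\le j-1$, I can therefore choose $t_j\nearrow+\infty$ large enough that
\[
|\partial^\alpha b_j(z)|\le 2^{-j}\,\langle z\rangle^{m_{j-1}-\rho|\alpha|}\qquad\text{whenever }|\alpha|\le j-1 .
\]
This is the key gain: at the cost of \emph{raising the order} from $m_j$ to $m_{j-1}$, the $j$‑dependent seminorm constants are converted into a summable smallness $2^{-j}$.

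With this choice, fix $r\ge2$ and split
$a-\sum_{j=1}^{r-1}a_j=\sum_{j=1}^{r-1}\bigl(\chi(\cdot/t_j)-1\bigr)a_j+\sum_{j\ge r}b_j$.
Each term of the first (finite) sum is compactly supported, hence lies in $\mathcal{S}=\Gamma_\rho^{-\infty}$. For the tail, fix $\alpha$: the finitely many indices $r\le j\le\max(r,|\alpha|+1)$ give terms in $\Gamma_\rho^{m_j}\subseteq\Gamma_\rho^{m_r}$, while for $j>\max(r,|\alpha|+1)$ the displayed estimate yields $|\partial^\alpha b_j(z)|\le 2^{-j}\langle z\rangle^{m_{j-1}-\rho|\alpha|}\le 2^{-j}\langle z\rangle^{m_r-\rho|\alpha|}$, using $m_{j-1}\le m_r$. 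Summing the geometric factor gives $\bigl|\partial^\alpha\sum_{j\ge r}b_j\bigr|\le C_\alpha\langle z\rangle^{m_r-\rho|\alpha|}$, so the tail is in $\Gamma_\rho^{m_r}$. Since the $m_j$ strictly decrease, $\overline{m}_r=m_r$, and \eqref{23.4} is verified, proving $a\thicksim\sum_j a_j$. For (ii), if also $a'\thicksim\sum_j a_j$, then for every $r$ both $a-\sum_{j<r}a_j$ and $a'-\sum_{j<r}a_j$ lie in $\Gamma_\rho^{m_r}$, hence so does $a-a'$; as $m_r\to-\infty$ this forces $a-a'\in\bigcap_m\Gamma_\rho^m=\mathcal{S}(\mathbb{R}^{2n})$ by \eqref{gammaminf}.

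The main obstacle is precisely this calibration of the cut‑off radii: one must simultaneously guarantee local finiteness (so that $a$ is smooth) and sufficient decay in $j$ (so that every tail $\sum_{j\ge r}b_j$ lands in $\Gamma_\rho^{m_r}$). The trade‑off of bounding $\partial^\alpha b_j$ by $\langle z\rangle^{m_{j-1}-\rho|\alpha|}$ rather than the natural $\langle z\rangle^{m_j-\rho|\alpha|}$ is exactly what absorbs the $j$‑dependent constants into $2^{-j}$ while keeping the remainder inside the target class; everything else is bookkeeping.
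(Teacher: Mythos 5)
Your proof is correct and is essentially the argument the paper relies on: for (i) it reproduces the standard Borel-type summation with excision functions $\chi(\cdot/t_j)$ and calibrated radii $t_j$ from Shubin's Proposition 23.1, which the paper cites rather than reproves, and for (ii) it uses the same one-line subtraction via $\bigcap_m\Gamma_\rho^m=\mathcal{S}(\mathbb{R}^{2n})$ that the paper notes. The calibration step (trading $m_j$ for $m_{j-1}$ to absorb the $j$-dependent constants into $2^{-j}$) and the tail estimate are carried out correctly.
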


(See Shubin \cite{Shubin}, Proposition 23.1). Note that property (ii)
immediately follows from (\ref{gammaminf}).

\subsection{The amplitude classes $\Pi_{\rho}^{m}$}

We will need for technical reasons an extension of the Shubin classes
$\Gamma_{\rho}^{m}(\mathbb{R}^{2n})$ defined above. Since Born-Jordan
operators are obtained by averaging Shubin's $\tau$-operators%
\[
\widehat{A}_{\tau}\psi(x)=\left(  \tfrac{1}{2\pi\hbar}\right)  ^{n}\iint
e^{\frac{i}{\hbar}p(x-y)}a((1-\tau)x+\tau y,p)\psi(y)d^{n}yd^{n}p
\]
over $\tau\in\lbrack0,1]$ we are led to consider pseudo-differential operators
of the type
\begin{equation}
\widehat{A}\psi(x)=\left(  \tfrac{1}{2\pi\hbar}\right)  ^{n}\iint e^{\frac
{i}{\hbar}p(x-y)}b(x,y,p)\psi(y)d^{n}yd^{n}p \label{aoscille}%
\end{equation}
where the function
\[
b(x,y,p)=\int_{0}^{1}a((1-\tau)x+\tau y,p)d\tau;
\]
is called \emph{amplitude} and is defined, not on $\mathbb{R}^{2n}%
\equiv\mathbb{R}_{x}^{n}\times\mathbb{R}_{p}^{n}$ but rather on $\mathbb{R}%
^{3n}\equiv\mathbb{R}_{x}^{n}\times\mathbb{R}_{y}^{n}\times\mathbb{R}_{p}^{n}%
$. It therefore makes sense to define an amplitude class generalizing
$\Gamma_{\rho}^{m}(\mathbb{R}^{2n})$ by allowing a dependence on the three
sets of variables $x$, $y$, and $p$ (cf. \cite{Shubin} Definition 23.3):

\begin{definition}
Let $m\in\mathbb{R}$. The symbol (or amplitude) class $\Pi_{\rho}%
^{m}(\mathbb{R}^{3n})$ consists of all functions $a\in C^{\infty}%
(\mathbb{R}^{3n})$ such that for some $m^{\prime}\in\mathbb{R}$ satisfy
\begin{equation}
|\partial_{p}^{\alpha}\partial_{x}^{\beta}\partial_{y}^{\gamma}a(x,y,p)|\leq
C_{\alpha\beta\gamma}\left\langle u\right\rangle ^{m-\rho|\alpha+\beta
+\gamma|}\left\langle x-y\right\rangle ^{m^{\prime}+\rho|\alpha+\beta+\gamma|}
\label{estim2}%
\end{equation}
for every $(\alpha,\beta,\gamma)\in\mathbb{N}^{3n}$, where $C_{\alpha
\beta\gamma}\geq0$ and $u=(x,y,p)$.
\end{definition}

It turns out that an operator (\ref{aoscille}) with amplitude $b\in\Pi_{\rho
}^{m}(\mathbb{R}^{3n})$ is a Shubin $\tau$-pseudo-differential operator with
symbol in $\Gamma_{\rho}^{m}(\mathbb{R}^{2n})$ -- and this for every value of
the parameter $\tau$:

\begin{proposition}
\label{propimpshu}Let $\tau$ be an arbitrary real number. (i) Every
pseudo-differential operator $\widehat{A}$ of the type (\ref{aoscille}) with
amplitude $b\in\Pi_{\rho}^{m}(\mathbb{R}^{3n})$ can be uniquely written in the
form $\widehat{A}=\operatorname*{Op}_{\tau}(a_{\tau})$ for some symbol
$a_{\tau}\in\Gamma_{\rho}^{m}(\mathbb{R}^{2n})$, that is%
\begin{equation}
\widehat{A}\psi(x)=\left(  \tfrac{1}{2\pi\hbar}\right)  ^{n}\iint e^{\frac
{i}{\hbar}p(x-y)}a_{\tau}((1-\tau)x+\tau y,p)\psi(y)d^{n}yd^{n}p; \label{atau}%
\end{equation}
the symbol $a_{\tau}$ has the asymptotic expansion
\begin{equation}
a_{\tau}(x,p)\sim\sum_{\beta,\gamma}\frac{1}{\beta!\gamma!}\tau^{|\beta
|}(1-\tau)^{|\gamma|}\partial_{p}^{\beta+\gamma}(i\hbar\partial_{x})^{\beta
}(-i\hbar\partial_{y})^{\gamma}b(x,y,p)|_{y=x}. \label{atauasym1}%
\end{equation}
(ii) In particular, choosing $\tau=\frac{1}{2}$, there exists $a_{\mathrm{W}%
}\in\Gamma_{\rho}^{m}(\mathbb{R}^{2n})$ such that $\widehat{A}%
=\operatorname*{Op}_{\mathrm{W}}(a_{\mathrm{W}})$.
\end{proposition}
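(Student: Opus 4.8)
The plan is to produce the symbol $a_\tau$ explicitly as an oscillatory integral read off from the Schwartz kernel of $\widehat A$, and then to verify by non-stationary phase estimates that it lies in $\Gamma_\rho^{m}(\mathbb{R}^{2n})$ and admits the expansion \eqref{atauasym1}. First I would write the kernel of the amplitude operator \eqref{aoscille},
\[
K(x,y)=\left(\tfrac{1}{2\pi\hbar}\right)^{n}\int e^{\frac{i}{\hbar}p\cdot(x-y)}b(x,y,p)\,d^{n}p,
\]
and demand that the same kernel be produced by a $\tau$-operator $\operatorname{Op}_{\tau}(a_\tau)$. Changing to the variables $w=(1-\tau)x+\tau y$ and $s=x-y$ (so that $x=w+\tau s$, $y=w-(1-\tau)s$ and $x-y=s$), the equality of kernels becomes a fibrewise Fourier relation in $s$, whose inversion yields the candidate
\[
a_\tau(w,p)=\left(\tfrac{1}{2\pi\hbar}\right)^{n}\iint e^{\frac{i}{\hbar}q\cdot s}\,b\big(w+\tau s,\,w-(1-\tau)s,\,p+q\big)\,d^{n}s\,d^{n}q .
\]
If this integral defines a symbol, then \eqref{atau} holds by construction, and $a_\tau$ is unique because the passage from a $\tau$-symbol to its kernel is injective (it is, fibrewise in $s$, a Fourier transform).

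The technical heart is to show that the oscillatory integral converges and that $a_\tau\in\Gamma_\rho^{m}(\mathbb{R}^{2n})$. Since it is not absolutely convergent, I would regularize it with the two identities
\[
\langle q\rangle^{-2N}(1-\hbar^{2}\Delta_{s})^{N}e^{\frac{i}{\hbar}q\cdot s}=e^{\frac{i}{\hbar}q\cdot s},\qquad \langle s\rangle^{-2N'}(1-\hbar^{2}\Delta_{q})^{N'}e^{\frac{i}{\hbar}q\cdot s}=e^{\frac{i}{\hbar}q\cdot s},
\]
and integrate by parts, transferring the Laplacians onto $b$. Each $\Delta_{s}$ differentiates $b$ in the $x,y$ slots and each $\Delta_{q}$ in the $p$ slot, so by the amplitude estimate \eqref{estim2} every such derivative gains a factor $\langle u\rangle^{-\rho}$ at the admissible cost of $\langle x-y\rangle^{+\rho}=\langle s\rangle^{+\rho}$. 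Choosing $N,N'$ large enough makes the integrand integrable in $(s,q)$, uniformly on compact sets of $(w,p)$; applying the same scheme after differentiating $a_\tau$ in $w$ and $p$ (which, through $x=w+\tau s$ and $y=w-(1-\tau)s$, amounts to differentiating $b$ in its spatial and momentum slots) reproduces precisely the decay $\langle(w,p)\rangle^{m-\rho|\alpha|}$ required by \eqref{est1}. This is the step where the two competing weights built into the definition of $\Pi_\rho^{m}(\mathbb{R}^{3n})$ are used decisively.

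To extract \eqref{atauasym1} I would Taylor-expand $b$ in its two spatial slots about the diagonal $x=y=w$,
\[
b\big(w+\tau s,w-(1-\tau)s,p+q\big)=\sum_{|\beta+\gamma|<M}\frac{(\tau s)^{\beta}\,(-(1-\tau)s)^{\gamma}}{\beta!\,\gamma!}\,\partial_{x}^{\beta}\partial_{y}^{\gamma}b(w,w,p+q)+R_{M},
\]
and evaluate each monomial term by the identity $s^{\delta}e^{\frac{i}{\hbar}q\cdot s}=\big(\tfrac{\hbar}{i}\partial_{q}\big)^{\delta}e^{\frac{i}{\hbar}q\cdot s}$, integration by parts in $q$, and the Fourier inversion $\left(\tfrac{1}{2\pi\hbar}\right)^{n}\iint e^{\frac{i}{\hbar}q\cdot s}g(p+q)\,d^{n}q\,d^{n}s=g(p)$. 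Using $-\hbar/i=i\hbar$ this turns the $(\beta,\gamma)$ term into $\tfrac{1}{\beta!\gamma!}\tau^{|\beta|}(1-\tau)^{|\gamma|}\partial_{p}^{\beta+\gamma}(i\hbar\partial_{x})^{\beta}(-i\hbar\partial_{y})^{\gamma}b|_{y=x}$, which is exactly the coefficient in \eqref{atauasym1}. The remainder $R_{M}$, controlled by the integration-by-parts estimates of the previous paragraph, yields a symbol in $\Gamma_\rho^{m-\rho M}(\mathbb{R}^{2n})$; since $m-\rho M\to-\infty$ as $M\to\infty$, this is precisely the asymptotic relation of Definition \ref{23.2}. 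Statement (ii) is then immediate: taking $\tau=\tfrac12$ gives $a_{\mathrm W}:=a_{1/2}\in\Gamma_\rho^{m}(\mathbb{R}^{2n})$ with $\widehat A=\operatorname{Op}_{\mathrm W}(a_{\mathrm W})$.

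I expect the main obstacle to be the estimate of the second paragraph: making the non-absolutely-convergent oscillatory integral rigorous and tracking, through the double integration by parts, that the growth $\langle x-y\rangle^{m'+\rho|\cdot|}$ allowed in \eqref{estim2} is always reabsorbed by the decay $\langle s\rangle^{-2N'}$ generated in the $q$-integration, so that genuine $\langle(w,p)\rangle^{m-\rho|\alpha|}$ bounds survive with no residual dependence on $m'$. The second delicate point is to control the Taylor remainder $R_{M}$ in these same norms uniformly in $\tau\in[0,1]$, which is what will later legitimate averaging over $\tau$ in \eqref{2.10bis}.
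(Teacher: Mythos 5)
Your argument is correct and is essentially the paper's own: the paper proves this proposition only by citing Shubin's Theorem 23.2 (and de Gosson's book, Section 14.2.2), and your sketch --- kernel identification in the variables $w=(1-\tau)x+\tau y$, $s=x-y$, regularization of the resulting oscillatory integral by double integration by parts, and Taylor expansion of $b$ on the diagonal to produce \eqref{atauasym1}, with the signs and the coefficients $\tau^{|\beta|}(1-\tau)^{|\gamma|}$ coming out right --- is precisely the standard proof of that cited theorem. One small correction to your closing remark: the growth $\langle x-y\rangle^{m'+\rho|\alpha+\beta+\gamma|}$ in \eqref{estim2} is reabsorbed not by the factor $\langle s\rangle^{-2N'}$ generated in the $q$-integration (that alone would fail for $\rho$ close to $1$, since differentiating $b$ costs $\langle s\rangle^{+\rho}$ per derivative) but by the accompanying gain $\langle u\rangle^{-\rho|\alpha+\beta+\gamma|}$ combined with the elementary bound $\langle x-y\rangle\leq C\langle (x,y,p)\rangle$, which makes the product of the two weights in \eqref{estim2} uniformly controlled by $\langle u\rangle^{m}\langle x-y\rangle^{m'}$; the factors $\langle q\rangle^{-2N}$ and $\langle s\rangle^{-2N'}$ are then only needed to beat the fixed exponents $|m|$, $m'$ and the dimension.
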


\begin{proof}
See Shubin \cite{Shubin}, Theorem 23.2 for the case $\hbar=1$ and de Gosson
\cite{Birkbis}, Section 14.2.2.
\end{proof}

We have in addition an asymptotic formula allowing to pass from one $\tau
$-symbol to another when $\widehat{A}$ is given by (\ref{atau}): if
$\widehat{A}=\operatorname*{Op}_{\tau}(a_{\tau})=\operatorname*{Op}%
_{\tau^{\prime}}(a_{\tau^{\prime}})$ with $a_{\tau},a_{\tau^{\prime}}\in
\Pi_{\rho}^{m}(\mathbb{R}^{3n})$, then
\[
a_{\tau}(x,p)\sim\sum_{\alpha\geq0}\frac{i^{-|\alpha|}}{\alpha!}(\tau^{\prime
}-\tau)^{|\alpha|}\partial_{p}^{\alpha}\partial_{x}^{\alpha}a_{\tau^{\prime}%
}(x,p)
\]
(\cite{Shubin}, Theorem 23.3).

\subsection{Elementary properties}

The class of all operators (\ref{aoscille}) with $b\in\Pi_{\rho}%
^{m}(\mathbb{R}^{3n})$ is denoted by $G_{\rho}^{m}(\mathbb{R}^{n})$ (cf.
\cite{Shubin}, Definition 23.4); $G^{-\infty}(\mathbb{R}^{n})=\cap
_{m\in\mathbb{R}}G_{\rho}^{m}(\mathbb{R}^{n})$ consists of all operators
$\mathcal{S}(\mathbb{R}^{n})\longrightarrow\mathcal{S}(\mathbb{R}^{n})$ with
distributional kernel $K\in\mathcal{S}(\mathbb{R}^{n}\times\mathbb{R}^{n})$.
It is useful to make the following remark: in the standard theory of
pseudo-differential operators (notably in its applications to partial
differential operators) it is customary to use operators
\begin{equation}
\widehat{A}\psi(x)=\left(  \tfrac{1}{2\pi}\right)  ^{n}\iint e^{i(x-y)\xi
}a(x,y,\xi)\psi(y)d^{n}yd^{n}\xi\label{aa}%
\end{equation}
which correspond, replacing $p$ with $\xi$ to the choice $\hbar=1$ in the
expression (\ref{aoscille}). It is in fact easy to toggle between the
expression above and its $\hbar$-dependent version: one just replaces
$a(x,y,\xi)$ with $a(x,y,p)$ and $\xi$ with $p/\hbar$ so that $d\xi=\hbar
^{-n}dp.$ However, when doing this, one must be careful to check that the
amplitude $a(x,y,\xi)$ and $a(x,y,\hbar\xi)$ belong to the same symbol class.
That this is indeed always the case when one deals with Shubin classes is
clear from Lemma \ref{Lemma1}. The following situation is important in our
context; consider the $\hbar=1$ Weyl operator%
\begin{equation}
\widehat{A}\psi(x)=\left(  \tfrac{1}{2\pi}\right)  ^{n}\iint e^{i(x-y)\xi
}a(\tfrac{1}{2}(x+y),\xi)\psi(y)d^{n}yd^{n}\xi. \label{aw}%
\end{equation}
Denoting by $\widehat{A}^{(\hbar)}$ the corresponding operator (\ref{aoscille}%
) in order to make the $\hbar$-dependence clear, that is%
\begin{equation}
\widehat{A}^{(\hbar)}\psi(x)=\left(  \tfrac{1}{2\pi\hbar}\right)  ^{n}\iint
e^{\frac{i}{\hbar}p(x-y)}a(\tfrac{1}{2}(x+y),p)\psi(y)d^{n}yd^{n}p, \label{ah}%
\end{equation}
we have $\widehat{A}^{(\hbar)}=\widehat{M}_{\hbar}^{-1}\widehat{B}\widehat
{M}_{\hbar}$ where $\widehat{B}$ is the operator (\ref{aw}) with symbol
$b(x,p)=a(\hbar^{1/2}x,\hbar^{1/2}p)$ and $\widehat{M}_{\hbar}$ is the unitary
scaling operator defined by $\widehat{M}_{\hbar}\psi(x)=\hbar^{n/4}\psi
(\hbar^{1/2})$.

Using the symbol estimates (\ref{est1}) it is straightforward to show that
every operator $\widehat{A}\in G_{\rho}^{m}(\mathbb{R}^{n})$ is a continuous
operator $\mathcal{S}(\mathbb{R}^{n})\longrightarrow\mathcal{S}(\mathbb{R}%
^{n})$ and can hence be extended into a continuous operator $\mathcal{S}%
^{\prime}(\mathbb{R}^{n})\longrightarrow\mathcal{S}^{\prime}(\mathbb{R}^{n})$.
It follows by duality that if $\widehat{A}\in G_{\rho}^{m}(\mathbb{R}^{n})$,
then $\widehat{A}^{\ast}\in G_{\rho}^{m}(\mathbb{R}^{n})$ (cf. \cite{Shubin},
Theorem 23.5).

One also shows that (\cite[Theorem 23.6]{Shubin}) if $\widehat{A}\in G_{\rho
}^{m}(\mathbb{R}^{n})$ and $\widehat{B}\in G_{\rho}^{m^{\prime}}%
(\mathbb{R}^{n})$ then $\widehat{C}=\widehat{A}\widehat{B}\in G_{\rho
}^{m+m^{\prime}}(\mathbb{R}^{n})$.

\section{Weyl versus Born-Jordan symbol }

\subsection{General results}

Comparing the expressions (\ref{AW1}) and (\ref{abjharmonic}) giving the
harmonic representations of respectively Weyl and Born-Jordan operators one
sees that if $\widehat{A}=\operatorname*{Op}_{\mathrm{W}}%
(a)=\operatorname*{Op}_{\mathrm{BJ}}(b)$ then the symbols $a$ and $b$ are
related by the convolution relation $b\ast\theta_{\mathrm{BJ}}=a$;
equivalently, taking the (symplectic) Fourier transform of each side
\begin{equation}
a_{\sigma}(z)=b_{\sigma}(z)\operatorname{sinc}\left(  \frac{px}{2\hbar
}\right)  .\label{bsigasig}%
\end{equation}
The difficulty in recovering $b_\sigma$ from $a_\sigma$ comes from the fact that the $\operatorname{sinc}$ function has
infinitely many zeroes; in fact $\operatorname{sinc}(px/2\hbar)=0$ for all
points $z=(x,p)$ such that $px=2N\pi\hbar$ for a non-zero integer $N$. We are
thus confronted with a division problem. Notice in addition that if the
solution $b$ exists then it is not unique: assume that $c(z)=e^{-i\sigma
(z,z_{0})/\hbar}$ where $p_{0}x_{0}=2N\pi\hbar$ ($N\in\mathbb{Z}$, $N\neq0$).
We have $c_{\sigma}(z)=(2\pi\hbar)^{n}\delta(z-z_{0})$ and hence by
(\ref{abjharmonic})
\[
\operatorname*{Op}\nolimits_{\mathrm{BJ}}(c)=\int\delta(z-z_{0}%
)\operatorname{sinc}\left(  \tfrac{px}{2\hbar}\right)  \widehat{T}%
(z)d^{2n}z=0.
\]
\ \ It follows that if $\operatorname*{Op}_{\mathrm{BJ}}(b)=\operatorname*{Op}%
_{\mathrm{W}}(a)$ then we also have $\operatorname*{Op}_{\mathrm{BJ}%
}(b+c)=\operatorname*{Op}_{\mathrm{W}}(a)$. Now, in \cite[Theorem 7]{cogoni15}
we have proven that the equation (\ref{bsigasig}) always has a (non-unique)
solution in $b\in\mathcal{S}(\mathbb{R}^{2n})$ for every given $a\in \mathcal{S}(\mathbb{R}^{2n})$; our proof used the theory of division of distributions. Thus every
Weyl operator has a Born-Jordan symbol; equivalently

\begin{proposition}
For every continuous linear operator $\widehat{A}:\mathcal{S}(\mathbb{R}%
^{n})\longrightarrow\mathcal{S}^{\prime}(\mathbb{R}^{n})$ there exists
$b\in\mathcal{S}^{\prime}(\mathbb{R}^{2n})$ such that $\widehat{A}%
=\operatorname*{Op}_{\mathrm{BJ}}(b)$.
\end{proposition}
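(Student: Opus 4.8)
The plan is to turn the assertion into a division problem in $\mathcal{S}'(\mathbb{R}^{2n})$ and to solve that problem by duality, using as analytic input the Schwartz-data case established in \cite[Theorem 7]{cogoni15}. First I would invoke the Schwartz kernel theorem, recalled in the Introduction: a continuous linear $\widehat{A}\colon\mathcal{S}(\mathbb{R}^{n})\to\mathcal{S}'(\mathbb{R}^{n})$ has a unique Weyl symbol $a\in\mathcal{S}'(\mathbb{R}^{2n})$ with $\widehat{A}=\operatorname{Op}_{\mathrm{W}}(a)$. By \eqref{bsigasig}, producing a Born-Jordan symbol $b\in\mathcal{S}'(\mathbb{R}^{2n})$ for $\widehat{A}$ amounts to solving
\[
a_{\sigma}(z)=s(z)\,b_{\sigma}(z),\qquad s(z)=\operatorname{sinc}\left(\tfrac{px}{2\hbar}\right),
\]
for $b_{\sigma}\in\mathcal{S}'(\mathbb{R}^{2n})$. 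As the symplectic Fourier transform is an automorphism of $\mathcal{S}'(\mathbb{R}^{2n})$, it suffices to prove that multiplication by the fixed smooth bounded function $s$ is a surjection $\mathcal{S}'(\mathbb{R}^{2n})\to\mathcal{S}'(\mathbb{R}^{2n})$: one then sets $b_{\sigma}$ to be any solution and recovers $b$ by inverting the symplectic Fourier transform.

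For the surjectivity I would argue by transposition. Multiplication by $s$ is also a continuous operator $M_{s}\colon\mathcal{S}(\mathbb{R}^{2n})\to\mathcal{S}(\mathbb{R}^{2n})$ (the derivatives of $\operatorname{sinc}$ are bounded), and since $s$ is real-valued the multiplication on $\mathcal{S}'$ is exactly its transpose. The map $M_{s}$ is injective, because $s$ is nonzero on the dense open set $\{px\neq 2N\pi\hbar\ \text{for every } N\neq0\}$, so $s\varphi=0$ forces $\varphi\equiv0$. By the closed-range theorem for maps between Fr\'echet spaces, a transpose is onto precisely when the original map is injective and has closed range; injectivity being clear, the entire statement reduces to the closedness of $s\,\mathcal{S}(\mathbb{R}^{2n})$ in $\mathcal{S}(\mathbb{R}^{2n})$. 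The Schwartz-data solvability of \cite[Theorem 7]{cogoni15} already shows that the range of the transpose contains the dense subspace $\mathcal{S}(\mathbb{R}^{2n})$; what remains, and what the same division machinery should also provide, is that this range is closed, so that density becomes surjectivity.

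The whole difficulty is thus concentrated in the division estimate for $s=\operatorname{sinc}(px/2\hbar)$, and I expect this to be the main obstacle. Two features of $s$ make it delicate: it has infinitely many zeros, lying on the quadric hypersurfaces $px=2N\pi\hbar$ with $N\in\mathbb{Z}\setminus\{0\}$, and it decays at infinity, so that $1/s$ grows and the quotient $a_{\sigma}/s$ must be kept tempered both near the zero set and in the large. The redeeming features are that $s$ is real-analytic and its zeros are simple and carried by smooth hypersurfaces, which yields a Lojasiewicz-type lower bound of the form $|s(z)|\geq c\,\langle z\rangle^{-M}\,\mathrm{dist}(z,\{s=0\})$ away from and near the zeros alike; inserting such an inequality into the H\"ormander--Lojasiewicz theory of division of distributions produces the tempered solution and, at the same time, the closed-range property. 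This analytic core is precisely what is borrowed from \cite{cogoni15}; granting it, the duality above closes the argument, and the non-uniqueness recorded before the statement is just the nontrivial kernel of the transpose, supported on $\{s=0\}$.
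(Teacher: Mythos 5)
Your reduction is the same as the paper's: by the Schwartz kernel theorem write $\widehat{A}=\operatorname*{Op}_{\mathrm{W}}(a)$ with $a\in\mathcal{S}'(\mathbb{R}^{2n})$ and convert the claim into the division problem (\ref{bsigasig}). The paper then simply invokes \cite[Theorem 7]{cogoni15}, which solves that division problem at the level of tempered distributions: for every $a_{\sigma}\in\mathcal{S}'(\mathbb{R}^{2n})$ there is $b_{\sigma}\in\mathcal{S}'(\mathbb{R}^{2n})$ with $b_{\sigma}\operatorname{sinc}(px/2\hbar)=a_{\sigma}$. (The sentence preceding the proposition, which appears to restrict both $a$ and $b$ to $\mathcal{S}$, must be read with $\mathcal{S}'$ in both places: for generic $a\in\mathcal{S}$ no solution $b\in\mathcal{S}$ can exist, since on the dense open set where $\operatorname{sinc}\neq0$ one would need $b_{\sigma}=a_{\sigma}/\operatorname{sinc}$, which is unbounded near the zero set.) So the cited theorem already yields the proposition with no further functional analysis.

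The duality scaffolding you add on top of this has a genuine hole, and it sits exactly where you placed it: the closed-range property of $M_{s}:\mathcal{S}(\mathbb{R}^{2n})\to\mathcal{S}(\mathbb{R}^{2n})$. By the very closed-range theorem you invoke, ``$M_{s}$ injective with closed range'' is \emph{equivalent} to surjectivity of its transpose on $\mathcal{S}'$, i.e.\ to the statement being proved; dense range of the transpose (your Schwartz-data input) together with injectivity gives nothing more without it, since a non-closed range can be dense and proper. As structured, your argument therefore either assumes what it proves or defers the entire analytic content --- uniform division estimates across the infinitely many hypersurfaces $px=2N\pi\hbar$, which accumulate at spacing of order $|z|^{-1}$ and on which the transversal derivative of $\operatorname{sinc}(px/2\hbar)$ decays --- to an unproven ``expectation''. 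The Łojasiewicz-type lower bound you write down is plausible but does not by itself give tempered division: H\"ormander's global division theorem is for polynomials, whereas $\operatorname{sinc}(px/2\hbar)$ is real-analytic with a non-compact zero set, which is precisely why \cite{cogoni15} requires a bespoke argument. The fix is simply to cite \cite[Theorem 7]{cogoni15} for what it actually provides --- solvability of (\ref{bsigasig}) in $\mathcal{S}'$ for $\mathcal{S}'$ data --- and drop the transposition step.
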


Notice that the existence of the solution $b$ of (\ref{bsigasig}), as
established in \cite{cogoni15}, is a purely qualitative result; it does
not tell us anything on the properties of that solution.

\subsection{Weyl symbol of a Born-Jordan operator}

We are going to show that every Born-Jordan operator with symbol in one of
the Shubin classes $\Gamma_{\rho}^{m}(\mathbb{R}^{2n})$ is a Weyl operator
with symbol {\it in the same symbol class} and produce an asymptotic expansion for
the latter. For this we will need the following elementary inequalities:

\begin{lemma}
Let $\xi$ and $\eta$ be positive numbers and $m\in\mathbb{R}$. We have
\begin{equation}
\min\{\xi^{m},\eta^{m}\}\leq C(\xi+\eta)^{m}\label{miniksi}%
\end{equation}
where $C=\max\{1,2^{-m}\}$ and
\begin{equation}
(1+|\xi-\eta|^{2})^{m}\leq2^{|m|}(1+|\xi|^{2})^{m}(1+|\eta|^{2})^{|m|}%
\label{Peetre}%
\end{equation}

\end{lemma}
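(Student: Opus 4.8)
The plan is to prove the two elementary inequalities separately, each by reducing to a manageable case and invoking elementary monotonicity. I treat \eqref{miniksi} first, since it is the simpler of the two.

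For \eqref{miniksi}, by symmetry I may assume without loss of generality that $\xi\leq\eta$, so that $\min\{\xi^m,\eta^m\}$ is either $\xi^m$ (when $m\geq0$) or $\eta^m$ (when $m<0$); in both cases the minimum equals $(\min\{\xi,\eta\})^{m}$ if $m\ge 0$ and $(\max\{\xi,\eta\})^m$ if $m<0$. The key observation is the two-sided bound $\max\{\xi,\eta\}\leq \xi+\eta\leq 2\max\{\xi,\eta\}$, together with $\tfrac12(\xi+\eta)\leq\max\{\xi,\eta\}\leq\xi+\eta$. When $m\geq 0$, raising $\xi+\eta\leq 2\max\{\xi,\eta\}$ is not quite what I want; instead I note $\min\{\xi^m,\eta^m\}=(\min\{\xi,\eta\})^m\leq(\xi+\eta)^m$ directly since $\min\{\xi,\eta\}\leq\xi+\eta$ and $t\mapsto t^m$ is increasing, giving the constant $C=1$. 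When $m<0$, the minimum is $(\max\{\xi,\eta\})^m$, and from $\max\{\xi,\eta\}\geq\tfrac12(\xi+\eta)$ together with the fact that $t\mapsto t^m$ is now decreasing, I get $(\max\{\xi,\eta\})^m\leq(\tfrac12(\xi+\eta))^m=2^{-m}(\xi+\eta)^m$. Taking $C=\max\{1,2^{-m}\}$ covers both ranges of $m$ simultaneously, which is exactly the stated constant.

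For \eqref{Peetre}, this is the classical Peetre inequality and I would prove it by first establishing the multiplicative submultiplicativity of the weight $\langle\cdot\rangle$ under the triangle inequality. Write $\zeta=\xi-\eta$, so that $\xi=\zeta+\eta$. The core estimate is the elementary inequality $1+|\xi|^2\leq 2(1+|\zeta|^2)(1+|\eta|^2)$, which follows from $|\xi|^2=|\zeta+\eta|^2\leq 2|\zeta|^2+2|\eta|^2$ and hence $1+|\xi|^2\leq 1+2|\zeta|^2+2|\eta|^2\leq 2(1+|\zeta|^2)(1+|\eta|^2)$, the last step because $2(1+|\zeta|^2)(1+|\eta|^2)=2+2|\zeta|^2+2|\eta|^2+2|\zeta|^2|\eta|^2$ dominates $1+2|\zeta|^2+2|\eta|^2$. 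Equivalently, solving for $1+|\zeta|^2$, I obtain $1+|\xi-\eta|^2\leq 2(1+|\xi|^2)(1+|\eta|^2)$ by the symmetric roles of $\xi$ and $\zeta=\xi-\eta$ (indeed $\xi=(\xi-\eta)+\eta$ swaps their places).

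The main obstacle, such as it is, lies in handling the two sign cases of $m$ uniformly in \eqref{Peetre}, because raising the basic bound to the power $m$ behaves differently depending on whether $m\geq0$ or $m<0$. For $m\geq0$ I raise $1+|\xi-\eta|^2\leq 2(1+|\xi|^2)(1+|\eta|^2)$ directly to the $m$-th power, obtaining $(1+|\xi-\eta|^2)^m\leq 2^m(1+|\xi|^2)^m(1+|\eta|^2)^m$, which is stronger than the claimed bound since $|m|=m$ and $(1+|\eta|^2)^m\leq(1+|\eta|^2)^{|m|}$. For $m<0$, I instead apply the reciprocal form: from $1+|\eta|^2\leq 2(1+|\xi|^2)(1+|\xi-\eta|^2)$ (again by the symmetric version with $\eta=(\eta-\xi)+\xi$ and $|\eta-\xi|=|\xi-\eta|$), raising to the negative power $m$ reverses the inequality to give $(1+|\eta|^2)^m\geq 2^m(1+|\xi|^2)^m(1+|\xi-\eta|^2)^m$, and rearranging yields $(1+|\xi-\eta|^2)^m\leq 2^{-m}(1+|\xi|^2)^{-m}(1+|\eta|^2)^m=2^{|m|}(1+|\xi|^2)^m(1+|\eta|^2)^{|m|}$ since $-m=|m|$. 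Both cases therefore produce the constant $2^{|m|}$ and match the asserted exponents, completing the argument.
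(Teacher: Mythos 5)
Your proof of \eqref{miniksi} is correct and, for $m<0$, is the same as the paper's (use $\max\{\xi,\eta\}\geq\tfrac12(\xi+\eta)$ and the fact that $t\mapsto t^{m}$ is decreasing). For $m\geq0$ your direct route via $\min\{\xi,\eta\}\leq\xi+\eta$ and monotonicity is actually cleaner than the paper's chain $\min\{\xi^{m},\eta^{m}\}\leq\xi^{m}+\eta^{m}\leq(\xi+\eta)^{m}$, whose second inequality fails for $0<m<1$ even though the conclusion survives. For \eqref{Peetre} you genuinely depart from the paper: the authors simply cite Chazarain--Piriou and H\"ormander, whereas you give a self-contained proof from the submultiplicativity estimate $1+|\zeta+\eta|^{2}\leq2(1+|\zeta|^{2})(1+|\eta|^{2})$ together with a case split on the sign of $m$; this is the standard textbook argument and the extra completeness is welcome.

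One slip to repair in the case $m<0$ of \eqref{Peetre}: your rearrangement correctly gives $(1+|\xi-\eta|^{2})^{m}\leq2^{-m}(1+|\xi|^{2})^{-m}(1+|\eta|^{2})^{m}$, which equals $2^{|m|}(1+|\xi|^{2})^{|m|}(1+|\eta|^{2})^{m}$, \emph{not} the asserted $2^{|m|}(1+|\xi|^{2})^{m}(1+|\eta|^{2})^{|m|}$ --- the exponents on the two factors got interchanged in your final identity. To land exactly on the stated form, start instead from $1+|\xi|^{2}\leq2(1+|\eta|^{2})(1+|\xi-\eta|^{2})$ and isolate the $(1+|\xi-\eta|^{2})$-factor; equivalently, note that the left-hand side of \eqref{Peetre} is symmetric in $\xi$ and $\eta$, so relabelling the two variables in your derivation yields the stated inequality. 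The idea is right; only the last line needs this adjustment.
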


\begin{proof}
Proof of (\ref{miniksi}). The case $m\geq0$ is straightforward: we have
\[
\min\{\xi^{m},\eta^{m}\}\leq\xi^{m}+\eta^{m}\leq(\xi+\eta)^{m}.
\]
Suppose $m<0$; if $\xi\leq\eta$ we have
\[
\min\{\xi^{m},\eta^{m}\}=\eta^{m}\leq2^{-m}(\xi+\eta)^{m};
\]
the case $\xi>\eta$ follows in the same way. Proof of (\ref{Peetre}): see for
instance Chazarain and Piriou \cite{chapi7} or H\"{o}rmander \cite{69}.
\end{proof}

The estimate (\ref{Peetre}) is usually referred to as Peetre's inequality in
the literature on pseudo-differential operators.

\begin{theorem}
\label{corollashu} Let $\widehat{A}_{\mathrm{BJ}}=\operatorname*{Op}%
_{\mathrm{BJ}}(a)$ with symbol $a\in\Gamma_{\rho}^{m}(\mathbb{R}^{2n})$. (i)
For every $\tau\in\mathbb{R}$ there exists $a_{\tau}\in\Gamma_{\rho}%
^{m}(\mathbb{R}^{2n})$ such that $\widehat{A}_{\mathrm{BJ}}=\operatorname*{Op}%
_{\tau}(a_{\tau})$. Here $a_{\tau}$ has the following asymptotic expansion
\begin{equation}
a_{\tau}(x,p)\sim\sum_{\alpha\in\mathbb{N}^{n}}\frac{(i\hbar)^{|\alpha|}%
(\tau^{|\alpha|+1}-(\tau-1)^{|\alpha|+1})}{\alpha!(|\alpha|+1)}\partial
_{x}^{\alpha}\partial_{p}^{\alpha}a(x,p). \label{atauabj}%
\end{equation}
(ii) In particular $\widehat{A}_{\mathrm{BJ}}=\operatorname*{Op}_{\mathrm{BJ}%
}(a)$ is a Weyl operator $\widehat{A}_{\mathrm{W}}=\operatorname*{Op}%
_{\mathrm{W}}(a_{\mathrm{W}})$ with symbol $a_{\mathrm{W}}\in\Gamma_{\rho}%
^{m}(\mathbb{R}^{2n})$, having the asymptotic expansion
\begin{equation}
a_{\mathrm{W}}(x,p)\sim\sum_{%
\genfrac{}{}{0pt}{}{\alpha\in\mathbb{N}^{n}}{|\alpha|\,{\mathrm{even}}}%
}\frac{1}{\alpha!(|\alpha|+1)}\left(\frac{i\hbar}{2}\right)^{|\alpha|}\partial
_{x}^{\alpha}\partial_{p}^{\alpha}a(x,p) \label{aweylbj}%
\end{equation}
and we have $a_{\mathrm{W}}-a\in\Gamma_{\rho}^{m-2\rho}(\mathbb{R}^{2n})$.
\end{theorem}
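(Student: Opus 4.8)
The plan is to reduce the whole statement to Shubin's amplitude calculus and then perform one explicit combinatorial computation. By the averaging formula (\ref{2.10bis}), $\widehat{A}_{\mathrm{BJ}}$ is an operator of the form (\ref{aoscille}) whose amplitude is the $\tau$-average
\[
b(x,y,p)=\int_0^1 a((1-\tau)x+\tau y,p)\,d\tau .
\]
Granting the single nontrivial point that $b$ belongs to the amplitude class $\Pi_\rho^m(\mathbb{R}^{3n})$, part (i) is immediate from Proposition \ref{propimpshu}(i), and it only remains to evaluate the abstract expansion (\ref{atauasym1}) for this particular $b$; part (ii) will then be the special case $\tau=\tfrac12$.

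I expect the membership $b\in\Pi_\rho^m(\mathbb{R}^{3n})$ to be the main obstacle, and it is where the inequalities of the preceding Lemma enter. Differentiating under the integral sign gives
\[
\partial_p^{\alpha}\partial_x^{\beta}\partial_y^{\gamma} b(x,y,p)=\int_0^1(1-\tau)^{|\beta|}\tau^{|\gamma|}\,(\partial_w^{\beta+\gamma}\partial_p^{\alpha} a)((1-\tau)x+\tau y,p)\,d\tau ,
\]
where $w=(1-\tau)x+\tau y$. Writing $N=|\alpha+\beta+\gamma|$ and $u=(x,y,p)$, and inserting the symbol estimate (\ref{est1}) for $a$, I must control $\langle (w,p)\rangle^{m-\rho N}$ by $\langle u\rangle^{m-\rho N}\langle x-y\rangle^{m'+\rho N}$ \emph{uniformly} in $\tau\in[0,1]$ for one fixed $m'$. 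The key geometric observation is that $|x-w|=\tau|x-y|$ and $|y-w|=(1-\tau)|x-y|$ are both at most $|x-y|$; convexity gives $\langle (w,p)\rangle\le\langle u\rangle$ always, while $|x|,|y|\le|w|+|x-y|$ forces $\langle u\rangle\le C\langle(w,p)\rangle\langle x-y\rangle$, that is $\langle(w,p)\rangle\ge C^{-1}\langle u\rangle/\langle x-y\rangle$. Raising these to the power $m-\rho N$ according to its sign, and using the inequalities (\ref{miniksi}) and (\ref{Peetre}) to absorb the constants, yields the required two-weight bound with $m'=\max\{0,-m\}$; since $(1-\tau)^{|\beta|}\tau^{|\gamma|}\le1$, integration in $\tau$ costs nothing, and hence $b\in\Pi_\rho^m(\mathbb{R}^{3n})$.

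With this in hand, part (i) follows by substituting $b$ into (\ref{atauasym1}). Evaluating at $y=x$ makes $w=x$, and the $\tau$-integral becomes the Beta integral $\int_0^1(1-\tau)^{|\beta|}\tau^{|\gamma|}\,d\tau=|\beta|!\,|\gamma|!/(|\beta|+|\gamma|+1)!$; together with the factor $(i\hbar)^{|\beta|}(-i\hbar)^{|\gamma|}$ each term is a constant multiple of $\partial_x^{\beta+\gamma}\partial_p^{\beta+\gamma}a$. Grouping $\beta+\gamma$ into a single multi-index $\alpha$, the multi-index Vandermonde identity $\sum_{|\beta|=k,\,\beta\le\alpha}1/(\beta!(\alpha-\beta)!)=\binom{|\alpha|}{k}/\alpha!$ reduces the inner sum to $\tfrac{|\alpha|!}{\alpha!}\sum_{k=0}^{|\alpha|}\tau^{k}(\tau-1)^{|\alpha|-k}$, and the elementary factorization $\sum_{k=0}^{N}\tau^{k}(\tau-1)^{N-k}=\tau^{N+1}-(\tau-1)^{N+1}$ (telescoping, since $\tau-(\tau-1)=1$) delivers exactly the coefficient in (\ref{atauabj}).

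Finally, for part (ii) I set $\tau=\tfrac12$, so that $\tau^{|\alpha|+1}-(\tau-1)^{|\alpha|+1}=2^{-(|\alpha|+1)}\bigl(1-(-1)^{|\alpha|+1}\bigr)$ vanishes when $|\alpha|$ is odd and equals $2^{-|\alpha|}$ when $|\alpha|$ is even; this collapses (\ref{atauabj}) to the stated expansion (\ref{aweylbj}), and Proposition \ref{propimpshu}(ii) identifies $a_{1/2}$ with a Weyl symbol $a_{\mathrm{W}}\in\Gamma_\rho^m(\mathbb{R}^{2n})$. Since the $\alpha=0$ term is $a$ itself and the first surviving correction has $|\alpha|=2$, hence order $m-4\rho$, the definition of asymptotic expansion (Definition \ref{23.2}) gives $a_{\mathrm{W}}-a\in\Gamma_\rho^{m-4\rho}\subseteq\Gamma_\rho^{m-2\rho}$, as claimed.
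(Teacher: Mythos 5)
Your proof is correct and follows essentially the same route as the paper's: reduce to Shubin's amplitude calculus via Proposition \ref{propimpshu} applied to the averaged amplitude $\int_0^1 a((1-\tau)x+\tau y,p)\,d\tau$, verify its membership in $\Pi_\rho^m(\mathbb{R}^{3n})$ by a two-weight estimate uniform in $\tau$, and then evaluate the expansion \eqref{atauasym1} combinatorially. The only differences are cosmetic: you do the $t$-integration first (Beta function) and the sum over the splitting $\alpha=\beta+\gamma$ second (multi-index Vandermonde), where the paper sums first to obtain $(\tau-t)^{|\alpha|}$ and integrates last; and your observation that the first nonzero correction gives $a_{\mathrm{W}}-a\in\Gamma_\rho^{m-4\rho}(\mathbb{R}^{2n})$ is in fact slightly sharper than the stated $m-2\rho$.
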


\begin{proof}
Property (ii) follows from (i) choosing $\tau=\frac{1}{2}$. (ii) Consider the
$\tau$-pseudo-differential operator $\widehat{A}_{\tau}=\operatorname*{Op}%
_{\tau}(a)$:
\[
\widehat{A}_{\tau}\psi(x)=\left(  \tfrac{1}{2\pi\hbar}\right)  ^{n}\int
e^{\frac{i}{\hbar}p(x-y)}a((1-\tau)x+\tau y,p)\psi(y)d^{n}yd^{n}p
\]
and set
\begin{equation}
a_{\mathrm{BJ}}(x,y,p)=\int_{0}^{1}a((1-\tau)x+\tau y,p)d\tau. \label{abj7}%
\end{equation}
We thus have, using \eqref{2.10bis},
\[
\widehat{A}_{\mathrm{BJ}}\psi(x)=\left(  \tfrac{1}{2\pi\hbar}\right)  ^{n}\int
e^{\frac{i}{\hbar}p(x-y)}a_{\mathrm{BJ}}(x,y,p)\psi(y)d^{n}yd^{n}p
\]
which is of the type (\ref{aoscille}). Let us show that $a_{\mathrm{BJ}}\in
\Pi_{\rho}^{m}(\mathbb{R}^{3n})$, \textit{i.e.}\ that we have estimates of the
type
\begin{equation}
|\partial_{x}^{\alpha}\partial_{y}^{\beta}\partial_{p}^{\gamma}a_{\mathrm{BJ}%
}(x,y,p)|\leq C_{\alpha,\beta,\gamma}\langle(x,y,p)\rangle^{m-\rho
|\alpha+\beta+\gamma|}\langle x-y\rangle^{m^{\prime}+\rho|\alpha+\beta
+\gamma|} \label{babaorum}%
\end{equation}
for some $m^{\prime}\in\mathbb{R}$ independent of $\alpha,\beta,\gamma$. The
result will follow using Proposition \ref{propimpshu}. Let us set
\[
b_{\tau}(x,y,p)=a((1-\tau)x+\tau y,p);
\]
we have
\[
\partial_{x}^{\alpha}\partial_{y}^{\beta}\partial_{p}^{\gamma}b_{\tau
}(x,y,p)=(1-\tau)^{|\alpha|}\tau^{|\beta|}(\partial_{x}^{\alpha+\beta}%
\partial_{p}^{\gamma}a)((1-\tau)x+\tau y,p)
\]
hence, since $a\in\Gamma_{\rho}^{m}(\mathbb{R}^{n})$, we have by \eqref{est1}
the estimates
\begin{equation}
|\partial_{x}^{\alpha}\partial_{y}^{\beta}\partial_{p}^{\gamma}b_{\tau
}(x,y,p)|\leq C_{\alpha+\beta,\gamma}(1-\tau)^{|\alpha|}\tau^{|\beta|}%
\langle((1-\tau)x+\tau y,p)\rangle^{m-\rho|\alpha+\beta+\gamma|}.
\label{babaorum2}%
\end{equation}
Now, by Peetre's inequality (\ref{Peetre}) there exists a constant
$C=C(m,\rho,\alpha,\beta,\gamma)>0$ such that the estimates
\[
\langle((1-\tau)x+\tau y,p)\rangle^{m-\rho|\alpha+\beta+\gamma|}\leq
C\langle(x,p)\rangle^{m-\rho|\alpha+\beta+\gamma|}\langle\tau(x-y)\rangle
^{|m|+\rho|\alpha+\beta+\gamma|}%
\]
and
\[
\langle((1-\tau)x+\tau y,p)\rangle^{m-\rho|\alpha+\beta+\gamma|}\!\leq
C\langle(y,p)\rangle^{m-\rho|\alpha+\beta+\gamma|}\langle(1-\tau
)(x-y)\rangle^{|m|+\rho|\alpha+\beta+\gamma|}%
\]
hold, hence
\begin{multline*}
\langle((1-\tau)x+\tau y,p)\rangle^{m-\rho|\alpha+\beta+\gamma|}\leq\\
C\min\{\langle(x,p)\rangle^{m-\rho|\alpha+\beta+\gamma|},\langle
(y,p)\rangle^{m-\rho|\alpha+\beta+\gamma|}\}\langle x-y\rangle^{|m|+\rho
|\alpha+\beta+\gamma|}.
\end{multline*}
This implies, using the inequality (\ref{miniksi}), that
\[
\langle((1-\tau)x+\tau y,p)\rangle^{m-\rho|\alpha+\beta+\gamma|}\leq
C^{\prime}\langle(x,y,p)\rangle^{m-\rho|\alpha+\beta+\gamma|}\langle
x-y\rangle^{|m|+\rho|\alpha+\beta+\gamma|}.
\]
Together with (\ref{babaorum2}) this inequality implies (\ref{babaorum}) with
$m^{\prime}=|m|$ after an integration on $\tau$. The asymptotic expansion
\eqref{atauabj} follows by using the expansion of the $\tau$-symbol $a_{\tau}$
in \eqref{atauasym1}, in terms of the amplitude $b(x,y,p)=a_{\mathrm{BJ}%
}(x,y,p)$ in \eqref{abj7}. Namely, observe that
\begin{align*}
\partial_{p}^{\beta+\gamma}(i\hbar\partial_{x})^{\beta}(-i\hbar\partial
_{y})^{\gamma}  &  b(x,y,p)|_{y=x}\\
&  =(i\hbar)^{|\beta+\gamma|}(-1)^{|\gamma|}\int_{0}^{1}(1-t)^{|\beta
|}t^{|\gamma|}\partial_{p}^{\beta+\gamma}\partial_{x}^{\beta+\gamma
}a(x,p)\,dt\\
&  =(i\hbar)^{|\beta+\gamma|}(-1)^{|\gamma|}\partial_{p}^{\beta+\gamma
}\partial_{x}^{\beta+\gamma}a(x,p)\int_{0}^{1}(1-t)^{|\beta|}t^{|\gamma|}dt.
\end{align*}
Setting $\alpha=\beta+\gamma$, so that $\beta=\alpha-\gamma$, we have
\[
\frac{1}{\beta!\gamma!}={\binom{\alpha}{\gamma}}\frac{1}{\alpha!}%
\]
and hence
\begin{align*}
\sum_{\gamma\leq\alpha}{\binom{\alpha}{\gamma}}[\tau(1-t)]^{|\alpha-\gamma
|}(-1)^{|\gamma|}  &  [(1-\tau)t]^{|\gamma|}\\
&  =\prod_{j=1}^{n}\sum_{\gamma_{j}\leq\alpha_{j}}{\binom{\alpha_{j}}%
{\gamma_{j}}}[\tau(1-t)]^{|\alpha_{j}-\gamma_{j}|}[-(1-\tau)t]^{|\gamma_{j}%
|}\\
&  =\prod_{j=1}^{n}[\tau(1-t)-(1-\tau)t]^{\alpha_{j}}\\
&  =(\tau-t)^{|\alpha|}.
\end{align*}
Computing the integral
\[
\int_{0}^{1}(\tau-t)^{|\alpha|}\,dt=\frac{\tau^{|\alpha|+1}-(\tau
-1)^{|\alpha|+1}}{|\alpha|+1}%
\]
we immediately obtain the asymptotic expansion for $a_{\tau}(x,p)$ in
\eqref{atauabj}. This concludes the proof.
\end{proof}

Notice that the asymptotic formula (\ref{aweylbj}) yields \textit{exact}
results when the Born-Jordan symbol $a$ is a polynomial in the variables
$x_{j},p_{k}$. For instance, when $n=1$ and $a(z)=a_{rs}(z)=x^{r}p^{s}$ it
leads to%
\begin{equation}
a_{rs,\mathrm{W}}(x,p)=\sum_{\substack{k\leq\inf(r,s)\\k\text{ \textrm{even}}%
}}\left(  \frac{i\hbar}{2}\right)  ^{k}\frac{k!}{k+1}\binom{r}{k}\binom{s}%
{k}x^{r-k}p^{s-k}.\label{ars}%
\end{equation}
We refer to Domingo and Galapon \cite{34} for a general discussion of
quantization of monomials.

Using \cite{Shubin}, Definition 23.4, the result above has the following
interesting consequence:

\begin{corollary}
\label{e1} A Born-Jordan operator $\widehat{A}_{\mathrm{BJ}}%
=\operatorname*{Op}_{\mathrm{BJ}}(a)$ with symbol $a\in\Gamma_{\rho}%
^{m}(\mathbb{R}^{2n})$ belongs to $G_{\rho}^{m}(\mathbb{R}^{n})$.
\end{corollary}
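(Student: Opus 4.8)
The plan is to observe that the statement is essentially a reformulation of what was already established while proving Theorem~\ref{corollashu}, read through the definition of the operator class. Recall (Shubin \cite{Shubin}, Definition 23.4) that $G_{\rho}^{m}(\mathbb{R}^{n})$ consists precisely of those operators expressible in the oscillatory-integral form \eqref{aoscille} with an amplitude lying in $\Pi_{\rho}^{m}(\mathbb{R}^{3n})$. Hence the task reduces to exhibiting $\widehat{A}_{\mathrm{BJ}}$ in exactly this form.

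First I would recall from \eqref{abj7} and the lines immediately following it that, setting
\[
a_{\mathrm{BJ}}(x,y,p)=\int_{0}^{1}a((1-\tau)x+\tau y,p)\,d\tau,
\]
the Born-Jordan operator can be written as
\[
\widehat{A}_{\mathrm{BJ}}\psi(x)=\left(\tfrac{1}{2\pi\hbar}\right)^{n}\iint e^{\frac{i}{\hbar}p(x-y)}a_{\mathrm{BJ}}(x,y,p)\psi(y)\,d^{n}y\,d^{n}p,
\]
which is manifestly of the type \eqref{aoscille}. Next I would invoke the amplitude bounds \eqref{babaorum} obtained in that same proof, which establish (with $m^{\prime}=|m|$) that $a_{\mathrm{BJ}}\in\Pi_{\rho}^{m}(\mathbb{R}^{3n})$. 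Combining these two observations with the definition of $G_{\rho}^{m}(\mathbb{R}^{n})$ recalled above gives $\widehat{A}_{\mathrm{BJ}}\in G_{\rho}^{m}(\mathbb{R}^{n})$ at once.

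I do not expect any genuine obstacle in this argument: all of the analytic substance---namely the verification of the $\Pi_{\rho}^{m}$ amplitude estimates, which rests on Peetre's inequality \eqref{Peetre} together with \eqref{miniksi}---has already been carried out in the proof of Theorem~\ref{corollashu}. The only thing that remains is to recognize that this verification, combined with the fact that $\widehat{A}_{\mathrm{BJ}}$ is a genuine oscillatory integral operator of the form \eqref{aoscille}, is exactly the defining requirement for membership in $G_{\rho}^{m}(\mathbb{R}^{n})$.
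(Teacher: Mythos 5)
Your proof is correct and follows essentially the same route as the paper: the corollary is stated there as an immediate consequence of Shubin's Definition 23.4 together with the fact, established in the proof of Theorem~\ref{corollashu}, that the amplitude $a_{\mathrm{BJ}}(x,y,p)=\int_{0}^{1}a((1-\tau)x+\tau y,p)\,d\tau$ lies in $\Pi_{\rho}^{m}(\mathbb{R}^{3n})$. Your write-up simply makes that implicit argument explicit.
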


This result reduces in many cases the study of Born-Jordan operators to that
of Shubin operators.

\subsection{The Born-Jordan symbol of a Weyl operator}

We now address the more difficult problem of finding the Born-Jordan symbol
of a given Weyl operator in $G_{\rho}^{m}(\mathbb{R}^{2n})$. As already
observed the analysis in \cite{cogoni15} did not provide an explicit formula
for it because of division problems. It is remarkable that, nevertheless, an
explicit and general \textit{asymptotic expansion} can be written down when
the symbol belongs to one of the classes $\Gamma_{\rho}^{m}(\mathbb{R}^{2n})$.
To this end we need a preliminary lemma about the formal power series arising
in \eqref{aweylbj}.

\begin{lemma}
\label{lemma3} Consider the power series
\[
\sum_{%
\genfrac{}{}{0pt}{}{\alpha\in\mathbb{N}^{n}}{|\alpha|\ \mathrm{even}}%
}\frac{1}{\alpha!(|\alpha|+1)}x^{\alpha}.
\]
Its formal reciprocal is given by the series $\sum_{\alpha\in\mathbb{N}^{n}%
}\frac{c_{\alpha}}{\alpha!}x^{\alpha}$ where $c_{0}=1$ and, for $\alpha
\not =0$,
\begin{equation}
c_{\alpha}=\alpha!\sum_{j=1}^{|\alpha|}(-1)^{j}\sum_{%
\genfrac{}{}{0pt}{}{\alpha^{(1)}+\ldots+\alpha^{(j)}=\alpha}{|\alpha
^{(1)}|,\ldots,|\alpha^{(j)}|\not =0\ \mathrm{even}}%
}\frac{1}{\alpha^{(1)}!\cdots\alpha^{(j)}!(|\alpha^{(1)}|+1)\cdots(|\alpha^{(j)}|+1)}.\label{calpha}%
\end{equation}

\end{lemma}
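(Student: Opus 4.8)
The plan is to compute the reciprocal directly from the geometric series for $1/(1+P)$, where $P$ denotes the part of the given series carrying no constant term, and then simply read off the coefficient of $x^{\alpha}$.

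First I would record that the series has constant term $1$: the $\alpha=0$ summand contributes $\frac{1}{0!\,(0+1)}=1$. Hence it is a unit in the ring of formal power series and its reciprocal is well defined. Writing
\[
\sum_{\substack{\alpha\in\mathbb{N}^{n}\\ |\alpha|\ \mathrm{even}}}\frac{1}{\alpha!(|\alpha|+1)}x^{\alpha}=1+P(x),\qquad P(x)=\sum_{\substack{\beta\in\mathbb{N}^{n},\ \beta\neq 0\\ |\beta|\ \mathrm{even}}}\frac{1}{\beta!(|\beta|+1)}x^{\beta},
\]
I note that every monomial occurring in $P$ has $|\beta|\geq 2$. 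Consequently $P^{j}$ is a formal power series all of whose monomials have degree $\geq 2j$, so for each fixed $\alpha$ only finitely many values of $j$ contribute to the coefficient of $x^{\alpha}$; in particular the expansion
\[
\frac{1}{1+P(x)}=\sum_{j=0}^{\infty}(-1)^{j}P(x)^{j}
\]
is a legitimate identity of formal power series.

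Next I would extract the coefficient of $x^{\alpha}$. Multiplying out $P(x)^{j}$ gives
\[
P(x)^{j}=\sum_{\beta^{(1)},\ldots,\beta^{(j)}}\ \prod_{i=1}^{j}\frac{1}{\beta^{(i)}!(|\beta^{(i)}|+1)}\,x^{\beta^{(1)}+\cdots+\beta^{(j)}},
\]
the sum running over $j$-tuples of nonzero multi-indices with each $|\beta^{(i)}|$ even. Collecting the terms with $\beta^{(1)}+\cdots+\beta^{(j)}=\alpha$ and summing over $j$, the coefficient of $x^{\alpha}$ in $1/(1+P)$ equals
\[
\sum_{j=0}^{\infty}(-1)^{j}\sum_{\substack{\alpha^{(1)}+\cdots+\alpha^{(j)}=\alpha\\ |\alpha^{(1)}|,\ldots,|\alpha^{(j)}|\neq 0\ \mathrm{even}}}\frac{1}{\alpha^{(1)}!\cdots\alpha^{(j)}!(|\alpha^{(1)}|+1)\cdots(|\alpha^{(j)}|+1)}.
\]
For $\alpha=0$ only $j=0$ survives and the coefficient is $1$, giving $c_{0}=1$. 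For $\alpha\neq 0$ the $j=0$ term vanishes, and since each $|\alpha^{(i)}|\geq 2$ we must have $2j\leq|\alpha|$, so the inner sum is empty once $j>|\alpha|$; the outer sum may therefore be truncated at $j=|\alpha|$. Identifying this coefficient with $c_{\alpha}/\alpha!$ and multiplying through by $\alpha!$ yields exactly formula \eqref{calpha}.

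The computation is essentially bookkeeping with compositions of multi-indices; the only points that require care are the justification that the geometric series is a valid formal identity, which follows from $P$ having no constant term, and the finiteness of the sum over $j$ for fixed $\alpha$, which follows from the parity-and-nonvanishing constraint forcing $|\alpha^{(i)}|\geq 2$. Neither presents a genuine obstacle, so I do not expect any hard step beyond careful indexing.
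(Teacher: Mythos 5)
Your proof is correct and follows exactly the route of the paper's own argument: expand $\bigl(1+P(x)\bigr)^{-1}$ as a geometric series and collect the coefficient of $x^{\alpha}$. You simply carry out in detail the bookkeeping that the paper leaves to the reader, including the (correct) observation that the parity constraint forces $|\alpha^{(i)}|\geq 2$ and hence the truncation of the sum over $j$.
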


\begin{proof}
The proof is straightforward: we expand
\[
\Big(1+\sum_{|\alpha|\not =0\ \mathrm{even}}\frac{1}{\alpha!(|\alpha|+1)}x^{\alpha
}\Big)^{-1}%
\]
as a geometric series and collect the similar terms. Alternatively, we could
also apply the Fa\`{a} di Bruno formula generalizing the chain rule to the
derivatives at $x=0$ of the function $x\longmapsto g(f(x))$ where
$g(t)=1/(1+t)$ and $f(x)=\sum\limits_{|\alpha|\not =0\ \mathrm{even}}\frac
{1}{\alpha!(|\alpha|+1)}x^{\alpha}$.
\end{proof}

Let us now prove our second main result:

\begin{theorem}
Consider a Weyl operator $\widehat{A}_{\mathrm{W}}=\operatorname*{Op}%
_{\mathrm{W}}(a)$ with Weyl symbol $a\in\Gamma_{\rho}^{m}(\mathbb{R}^{2n})$.
Let $b\in\Gamma_{\rho}^{m}(\mathbb{R}^{2n})$ be any symbol (whose existence is
guaranteed by Proposition \ref{23.1.}) with the following asymptotic
expansion:
\begin{equation}
b(x,p)\sim\sum_{%
\genfrac{}{}{0pt}{}{\alpha\in\mathbb{N}^{n}}{|\alpha|\,{\mathrm{even}}}%
}\frac{c_{\alpha}}{\alpha!}\left(
\frac{i\hbar}{2}\right)  ^{|\alpha|}\partial_{x}^{\alpha}\partial_{p}^{\alpha
}a(x,p),\label{ases}%
\end{equation}
where the coefficients $c_{\alpha}$ are given in \eqref{calpha} ($c_{0}=1$).

Let $\widehat{A}_{\mathrm{BJ}}=\operatorname*{Op}_{\mathrm{BJ}}(b)$ be the
corresponding Born-Jordan operator. Then
\begin{equation}
\widehat{A}_{\mathrm{BJ}}=\widehat{A}_{\mathrm{W}}+R\label{aar}%
\end{equation}
where $R$ is a pseudodifferential operator with symbol in the Schwartz space
$\mathcal{S}(\mathbb{R}^{2n})$.

\begin{proof}
The operator $\widehat{A}_{\mathrm{BJ}}=\operatorname*{Op}_{\mathrm{BJ}}(b)$
by Theorem \ref{corollashu} can be written as a Weyl operator with Weyl
symbol
\begin{equation}
a_{\mathrm{W}}(x,p)\sim\sum_{%
\genfrac{}{}{0pt}{}{\alpha\in\mathbb{N}^{n}}{|\alpha|\,{\mathrm{even}}}%
}\frac{1}{\alpha!(|\alpha|+1)}\left(  \frac{i\hbar}{2}\right)  ^{|\alpha
|}\partial_{x}^{\alpha}\partial_{p}^{\alpha}b(x,p).\label{awabj}%
\end{equation}
Now we substitute in this expression the asymptotic expansion \eqref{ases} for
$b$ and we use the fact that the formal differential operators given by the
series
\[
\sum_{%
\genfrac{}{}{0pt}{}{\alpha\in\mathbb{N}^{n}}{|\alpha|\,{\mathrm{even}}}%
}\frac{1}{\alpha!(|\alpha|+1)}\left(  \frac{i\hbar}{2}\right)^{|\alpha|}\partial
_{x}^{\alpha}\partial_{p}^{\alpha}\text{ \ \textit{and} }\sum_{%
\genfrac{}{}{0pt}{}{\alpha\in\mathbb{N}^{n}}{|\alpha|\,{\mathrm{even}}}%
}\frac{c_{\alpha}}{\alpha!}\left(  \frac{i\hbar}{2}\right)
^{|\alpha|}\partial_{x}^{\alpha}\partial_{p}^{\alpha}%
\]
are inverses of each other in view of Lemma \ref{lemma3} (to see this,
formally replace $x=(x_{1},\ldots,x_{n})$ in Lemma \ref{lemma3} by
$(i\hbar/2)(\partial_{x_{1}}\partial_{p_{1}},\ldots,\partial_{x_{n}}%
\partial_{p_{n}}$). It follows that
\begin{equation}
a_{\mathrm{W}}-a\in\bigcap_{N\in\mathbb{N}}\Gamma_{\rho}^{-N}(\mathbb{R}%
^{2n})=\mathcal{S}(\mathbb{R}^{2n})\label{awas}%
\end{equation}
hence (\ref{aar}).
\end{proof}
\end{theorem}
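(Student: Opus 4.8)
The plan is to leverage Theorem~\ref{corollashu} together with the algebraic inversion provided by Lemma~\ref{lemma3}, so that the entire argument reduces to the composition of two mutually inverse formal differential operators. Since $b\in\Gamma_{\rho}^{m}(\mathbb{R}^{2n})$ (its existence guaranteed by Proposition~\ref{23.1.}), the Born-Jordan operator $\widehat{A}_{\mathrm{BJ}}=\operatorname*{Op}_{\mathrm{BJ}}(b)$ is by Theorem~\ref{corollashu}(ii) a Weyl operator $\operatorname*{Op}_{\mathrm{W}}(a_{\mathrm{W}})$, where $a_{\mathrm{W}}$ has the asymptotic expansion \eqref{awabj}. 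The strategy is then to substitute the defining expansion \eqref{ases} for $b$ into \eqref{awabj} and show that the resulting double series collapses back to $a$ modulo a Schwartz symbol.

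First I would introduce the shorthand of regarding $x_{j}$ in Lemma~\ref{lemma3} as the commuting symbols $(i\hbar/2)\partial_{x_{j}}\partial_{p_{j}}$, so that the two formal power series appearing in the theorem become formal differential operators acting on symbols. By Lemma~\ref{lemma3} these two series are formal reciprocals, hence as formal differential operators their composition is the identity. Concretely, substituting \eqref{ases} into \eqref{awabj} gives a double sum over pairs of even multi-indices; collecting terms by the total multi-index $\gamma=\alpha+\beta$ and invoking the defining relation $\sum_{\alpha+\beta=\gamma}(\text{coefficient of }\alpha)(c_{\beta}/\beta!)=\delta_{\gamma,0}$ from the reciprocal-series identity, every term with $\gamma\neq 0$ cancels and only the $\gamma=0$ term, namely $a$ itself, survives at the formal level.

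The content of the argument is that this formal cancellation holds not merely symbolically but in the precise sense of asymptotic expansions from Definition~\ref{23.2}. Here I would argue that both expansions \eqref{ases} and \eqref{awabj} are genuine asymptotic expansions in the Shubin sense: each summand with $|\alpha|=2k$ lies in $\Gamma_{\rho}^{m-2\rho k}(\mathbb{R}^{2n})$, the orders strictly decrease, and the associated remainders after truncation lie in the appropriate lower-order classes. Since the formal differential operators manipulate these expansions term-by-term and the cancellation is exact at every order, the difference $a_{\mathrm{W}}-a$ has all its homogeneous contributions vanish, so $a_{\mathrm{W}}-a\in\Gamma_{\rho}^{-N}(\mathbb{R}^{2n})$ for every $N\in\mathbb{N}$. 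By \eqref{gammaminf} this intersection is exactly $\mathcal{S}(\mathbb{R}^{2n})$, giving \eqref{awas}.

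The main obstacle I anticipate is making rigorous the claim that one may freely substitute one asymptotic expansion into another and reorganize the resulting double series without affecting the asymptotic class of the remainder. The safest route is to work with finite truncations: for each $N$, truncate both expansions at the order controlling $\Gamma_{\rho}^{-N}$, verify that the finitely many retained terms cancel exactly by the reciprocal identity, and bound the finitely many remainder terms using the mapping properties of the $\Gamma_{\rho}$ classes under differentiation (recall $\partial_{z}^{\alpha}a\in\Gamma_{\rho}^{m-\rho|\alpha|}$) together with the vector-space and multiplication stability \eqref{gammatau2}. Once the finite cancellation and the remainder estimates are in place, letting $N\to\infty$ yields \eqref{awas}, and \eqref{aar} follows immediately since $\operatorname*{Op}_{\mathrm{W}}(a_{\mathrm{W}})-\operatorname*{Op}_{\mathrm{W}}(a)=\operatorname*{Op}_{\mathrm{W}}(a_{\mathrm{W}}-a)$ is a pseudodifferential operator with Schwartz symbol.
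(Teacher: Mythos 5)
Your proposal is correct and follows essentially the same route as the paper: invoke Theorem \ref{corollashu}(ii) to obtain the Weyl symbol expansion \eqref{awabj} for $\operatorname*{Op}_{\mathrm{BJ}}(b)$, substitute \eqref{ases}, and use the reciprocal-series identity of Lemma \ref{lemma3} to cancel all terms of negative order, concluding via \eqref{gammaminf}. Your additional remarks on justifying the substitution of one asymptotic expansion into another by finite truncation merely make explicit a step the paper leaves implicit, and do not change the argument.
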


Notice that in dimension $n=1$ we have
\[
\sum_{k\geq0\ \mathrm{even}}\frac{1}{k!(k+1)}x^{k}=\frac{1}{x}\sum_{k\geq0\ \mathrm{even}}\frac{1}{(k+1)!}x^{k+1} =\frac{\sinh x}{x} =:F(x)
\]
so that $c_{k}=\partial_{x}^{k}(1/F(x))|_{x=0}$. In particular, $c_{k}=0$ for
odd $k$. In this case the series expansion of $1/F(x)$ is particularly easy, since it
coincides with the MacLaurin series expansion of the function 
$$\frac{1}{F(x)}= x \, \mbox{cosech} \,x = \sum_{k\geq0\ \mathrm{even}}\frac{2-2^k}{k!} B_k x^{k}
$$
where the $B_k$ are the Bernoulli numbers
\begin{equation}\label{Bernoulli}
B_k =\lim_{x\rightarrow 0} f^{(k)}(x),
\end{equation} 
with $$ f(x)= \frac{x}{e^x-1}.$$
More explicitly,
$$\frac{1}{F(x)}= x \, \mbox{cosech} \,x = 1-\frac16 x^2+\frac{7}{360}x^4-\frac{31}{15120}x^6+\frac{127}{604800}x^8-\cdots
$$
and the coefficients $c_k$, with $k$ even, are provided by
\[
c_{0}=1,\ c_{2}=-\frac{1}{3},\ c_{4}=\frac{7}{15},\ c_{6}=-\frac{31}{21},\ c_{8}=\frac{127}{15},\ \ldots\quad(n=1).
\]
In this case formula (\ref{ases}) takes the simple form
\[
b(x,p)\sim\sum_{k\geq0\text{ \textrm{even}}}\frac{c_{k}}{k!}\left(
\frac{i\hbar}{2}\right)  ^{k}\partial_{x}^{k}\partial_{p}^{k}a(x,p).
\]
As in the case of formula (\ref{ars}), the asymptotic expansion (\ref{ases})
becomes exact (and reduces to a finite sum) when the symbol $a$ is a
polynomial. For instance, assuming $n=1$ choose $a(z)=a_{rs}(z)=x^{r}p^{s}$.
Then the formula above yields
\begin{align}
b_{rs,\mathrm{BJ}}(x,p)&=\sum_{k\leq\inf(r,s)\text{ }k\text{ \textrm{even}}%
}k!  c_{k}\left(  \frac{i\hbar}{2}\right)^{k} \binom{r}{k}\binom{s}{k}%
x^{r-k}p^{s-k}.\label{arsbis}\\
&=\sum_{k\leq\inf(r,s)\text{ }k\text{ \textrm{even}}%
} k! (2-2^k) B_k\left(  \frac{i\hbar}{2}\right)^{k} \binom{r}{k}\binom{s}{k}%
x^{r-k}p^{s-k}.\notag
\end{align}
where the $B_k$ are the Bernoulli numbers defined in \eqref{Bernoulli}.

We also make the following remark: formulas (\ref{awabj}) and (\ref{awas})
show that (modulo a term in $\mathcal{S}(\mathbb{R}^{2n})$) a Weyl operator
with symbol in $\Gamma_{\rho}^{m}(\mathbb{R}^{2n})$ has a Born-Jordan symbol
belonging to the same class $\Gamma_{\rho}^{m}(\mathbb{R}^{2n})$. This is
however by no means a uniqueness result since, as we have already observed, we have $\operatorname*{Op}%
_{\mathrm{BJ}}(b+c)=0$ for all symbols $c(z)=e^{-i\sigma(z,z_{0})/\hbar}$
where $p_{0}x_{0}=2N\pi\hbar$ ($N\in\mathbb{Z}$, $N\neq0$). Observe that such a symbol
$c$ belongs to none of the symbol classes $\Gamma_{\rho}^{m}(\mathbb{R}^{2n})$.

\section{Regularity and Global Hypoellipticity Results}

In order to define the Sobolev--Shubin spaces (cf.\
\cite{Shubin}, Definition 25.3), we recall the definition of  anti-Wick operators.  The anti-Wick operator $\operatorname*{Op}_{\mathrm{AW}}(a)$ with symbol $a$ is defined by
\[\mathrm{Op}_{\mathrm{AW}}(a)f=\int a(z) P_z f d^{2n} z,
\]
where $P_z f(t)= \langle f, \Phi_z\rangle \Phi_z(t)$ are orthogonal projections on $L^2(\mathbb{R}^{n})$ on the functions $$\Phi_z(t)=\pi^{-n/4} e^{i tp } e^{-\frac{|t-x|^2}{2}},\quad z=(x,p)\in  \mathbb{R}^{2n}$$ (i.e., phase-space shifts of the Gaussian $\pi^{-n/4}e^{-\frac{|t|^2}{2}})$.
\begin{definition}
For $s\in\mathbb{R}$  consider the anti-Wick symbol $\langle z\rangle^s$, $z\in \mathbb{R}^{2n}$, and let $A_s=\operatorname*{Op}_{\mathrm{AW}}(a)$ be the corresponding anti-Wick operator.  The Sobolev--Shubin space $Q^{s}$ is defined by
\[
Q^{s}=\{ f\in S'(\mathbb{R}^{n})\,:  A_s f\in L^2(\mathbb{R}^{n}) \}  =A_s^{-1}L^2(\mathbb{R}^{n}).
\]
\end{definition}

We then have the following continuity result for Born-Jordan operators:

\begin{proposition}
Let $\widehat{A}_{\mathrm{BJ}}=\operatorname*{Op}_{\mathrm{BJ}}(a)$ with
symbol $a\in\Gamma_{\rho}^{m}(\mathbb{R}^{2n})$. We have
\[
\widehat{A}_{\mathrm{BJ}}\,:Q^{s}(\mathbb{R}^{n})\rightarrow Q^{s-m}%
(\mathbb{R}^{n}).
\]

\end{proposition}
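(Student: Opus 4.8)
The plan is to reduce the continuity statement to the corresponding mapping property for Shubin operators, which is already available in the literature. By Corollary \ref{e1}, every Born-Jordan operator $\widehat{A}_{\mathrm{BJ}}=\operatorname*{Op}_{\mathrm{BJ}}(a)$ with $a\in\Gamma_{\rho}^{m}(\mathbb{R}^{2n})$ belongs to $G_{\rho}^{m}(\mathbb{R}^{n})$; indeed, Theorem \ref{corollashu}(ii) tells us that $\widehat{A}_{\mathrm{BJ}}=\operatorname*{Op}_{\mathrm{W}}(a_{\mathrm{W}})$ for some Weyl symbol $a_{\mathrm{W}}\in\Gamma_{\rho}^{m}(\mathbb{R}^{2n})$, so $\widehat{A}_{\mathrm{BJ}}$ is a Shubin pseudodifferential operator of order $m$. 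Thus the whole problem is to show that a generic operator in $G_{\rho}^{m}(\mathbb{R}^{n})$ maps $Q^{s}(\mathbb{R}^{n})$ continuously into $Q^{s-m}(\mathbb{R}^{n})$.

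The core continuity result for Shubin operators acting on Sobolev--Shubin spaces is standard (see Shubin \cite{Shubin}, around the discussion of the spaces $Q^{s}$, Definition 25.3 and the subsequent theorems). The structure of that argument, which I would invoke rather than reprove, is as follows. The anti-Wick operator $A_{s}=\operatorname*{Op}_{\mathrm{AW}}(\langle z\rangle^{s})$ defining $Q^{s}$ is itself a Shubin operator of order $s$, elliptic in the Shubin sense, with a parametrix $A_{s}^{-1}$ that agrees with an operator in $G_{\rho}^{-s}(\mathbb{R}^{n})$ modulo a smoothing (hence harmless) remainder. Writing $f\in Q^{s}$ means precisely $A_{s}f\in L^{2}$, so the claim $\widehat{A}_{\mathrm{BJ}}f\in Q^{s-m}$ is equivalent to $A_{s-m}\widehat{A}_{\mathrm{BJ}}f\in L^{2}$. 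The composition $A_{s-m}\widehat{A}_{\mathrm{BJ}}A_{s}^{-1}$ is, by the composition rule $G_{\rho}^{a}\circ G_{\rho}^{b}\subseteq G_{\rho}^{a+b}$ recalled at the end of Section 3, an operator in $G_{\rho}^{(s-m)+m+(-s)}(\mathbb{R}^{n})=G_{\rho}^{0}(\mathbb{R}^{n})$.

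It then remains only to invoke the $L^{2}$-boundedness of operators of order zero: every $\widehat{B}\in G_{\rho}^{0}(\mathbb{R}^{n})$ extends to a bounded operator on $L^{2}(\mathbb{R}^{n})$ (this is the Shubin analogue of the Calder\'on--Vaillancourt theorem, also in \cite{Shubin}). Given $f\in Q^{s}$, we have $A_{s}f\in L^{2}$, whence
\[
A_{s-m}\widehat{A}_{\mathrm{BJ}}f=\bigl(A_{s-m}\widehat{A}_{\mathrm{BJ}}A_{s}^{-1}\bigr)(A_{s}f)\in L^{2}(\mathbb{R}^{n}),
\]
so that $\widehat{A}_{\mathrm{BJ}}f\in Q^{s-m}$, with the corresponding norm estimate. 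This proves the proposition.

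The only delicate point, and the one I would flag as the main obstacle, is the careful handling of the parametrix $A_{s}^{-1}$: the inverse of the anti-Wick operator $A_{s}$ is not literally an element of $G_{\rho}^{-s}$ but coincides with one modulo $G^{-\infty}(\mathbb{R}^{n})$. One must verify that the smoothing remainders arising from replacing $A_{s}^{-1}$ by its parametrix, and from the non-exact inversion, map into the right spaces — which they do, since operators in $G^{-\infty}(\mathbb{R}^{n})$ send $\mathcal{S}'(\mathbb{R}^{n})$ into $\mathcal{S}(\mathbb{R}^{n})\subseteq Q^{t}$ for every $t$. Apart from this bookkeeping, the proof is entirely a matter of assembling the composition calculus and the zeroth-order $L^{2}$-boundedness already established for the class $G_{\rho}^{m}(\mathbb{R}^{n})$.
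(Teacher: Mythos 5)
Your proof is correct and follows essentially the same route as the paper: reduce via Corollary \ref{e1} to membership in $G_{\rho}^{m}(\mathbb{R}^{n})$ and then invoke the standard mapping property of Shubin operators on the Sobolev--Shubin scale (Theorem 25.2 in \cite{Shubin}). The paper simply cites that theorem, whereas you additionally sketch its parametrix-and-composition proof; this is consistent but not needed.
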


\begin{proof}
By Corollary \eqref{e1} the operator $\widehat{A}_{\mathrm{BJ}}$ is in the
class $G_{\rho}^{m}(\mathbb{R}^{n})$. The result follows by applying Theorem
25.2 in \cite{Shubin}.
\end{proof}

It turns out that the Sobolev--Shubin spaces are particular cases of Feichtinger's modulation spaces
\cite{37,38,65}; we do not discuss these here and refer to Cordero et al
\cite{cogoni16b} for a study of continuity properties of Born-Jordan
operators in these spaces. In fact, our reduction result Theorem
\ref{corollashu} allows to transpose to Born-Jordan operators all known
regularity results for Weyl operators with symbol in the symbol classes
$\Gamma_{\rho}^{m}(\mathbb{R}^{2n})$. For instance:

\begin{proposition}
Let $a\in\Gamma_{\rho}^{0}(\mathbb{R}^{2n})$. Then $\widehat{A}_{\mathrm{BJ}%
}=\operatorname*{Op}_{\mathrm{BJ}}(a)$ is a bounded operator on $L^{2}%
(\mathbb{R}^{n})$.
\end{proposition}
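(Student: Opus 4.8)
The plan is to reduce this boundedness statement to the corresponding well-known result for Shubin operators. First I would invoke Corollary \ref{e1}, which tells us that a Born-Jordan operator $\widehat{A}_{\mathrm{BJ}}=\operatorname*{Op}_{\mathrm{BJ}}(a)$ with symbol $a\in\Gamma_{\rho}^{0}(\mathbb{R}^{2n})$ belongs to the class $G_{\rho}^{0}(\mathbb{R}^{n})$. This is precisely the case $m=0$ of the reduction result, and it is the conceptual heart of the matter: once we know $\widehat{A}_{\mathrm{BJ}}$ is an honest Shubin operator of order zero, the conflicting nature of Born-Jordan and Weyl quantization is irrelevant for the boundedness question.

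Next I would apply the previous Proposition with $m=0$, which asserts that $\widehat{A}_{\mathrm{BJ}}\colon Q^{s}(\mathbb{R}^{n})\to Q^{s-m}(\mathbb{R}^{n})$ for a symbol of order $m$. Specializing to $m=0$ and $s=0$, and recalling that $Q^{0}(\mathbb{R}^{n})=L^{2}(\mathbb{R}^{n})$ (since $A_0=\operatorname*{Op}_{\mathrm{AW}}(\langle z\rangle^{0})=\operatorname*{Op}_{\mathrm{AW}}(1)$ is the identity, whence $Q^{0}=A_0^{-1}L^2=L^{2}$), one obtains directly that $\widehat{A}_{\mathrm{BJ}}$ maps $L^2(\mathbb{R}^{n})$ into $L^2(\mathbb{R}^{n})$. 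The continuity (boundedness of the operator norm) is part of the $G_{\rho}^{m}$-calculus, being the content of Theorem 25.2 in \cite{Shubin} already cited above.

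Alternatively, and perhaps more transparently, I would use part (ii) of Theorem \ref{corollashu}: the operator $\widehat{A}_{\mathrm{BJ}}$ equals a Weyl operator $\operatorname*{Op}_{\mathrm{W}}(a_{\mathrm{W}})$ with Weyl symbol $a_{\mathrm{W}}\in\Gamma_{\rho}^{0}(\mathbb{R}^{2n})$ in the same class. One then appeals to the classical $L^2$-boundedness theorem for Weyl (or Shubin $\tau$-) operators with symbol of order $0$, which is again a standard consequence of the Calder\'on--Vaillancourt-type estimates available within Shubin's framework (\cite{Shubin}, Theorem 24.3 / 25.2). Either route is short.

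I do not expect any genuine obstacle here; the statement is essentially a corollary of the machinery already assembled. The only point requiring a line of care is the identification $Q^{0}=L^{2}$, which must be checked from the definition of the Sobolev--Shubin spaces rather than taken for granted, and the observation that the order-$0$ hypothesis $a\in\Gamma_{\rho}^{0}$ is exactly what makes the target space $Q^{s-m}$ collapse onto the source space when $m=0$. The substantive work has already been done in establishing that Born-Jordan operators of order $m$ lie in $G_{\rho}^{m}$; given that, boundedness on $L^2$ is immediate.
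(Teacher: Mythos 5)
Your proposal is correct, and your ``alternative'' route --- writing $\widehat{A}_{\mathrm{BJ}}$ as a Weyl operator with symbol $a_{\mathrm{W}}\in\Gamma_{\rho}^{0}(\mathbb{R}^{2n})$ via Theorem \ref{corollashu}~(ii) and then invoking Shubin's Theorem 24.3 --- is precisely the paper's proof. Your first route, through Corollary \ref{e1} and the $Q^{s}$-continuity proposition with $s=m=0$, is an equally valid minor variant; the only detail to note there is that $A_{0}=\operatorname*{Op}_{\mathrm{AW}}(1)$ is a nonzero constant multiple of the identity (by the coherent-state resolution of the identity), which still gives $Q^{0}=L^{2}(\mathbb{R}^{n})$ as you claim.
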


\begin{proof}
In view of Theorem 24.3 in \cite{Shubin} every Weyl operator with symbol in
$\Gamma_{\rho}^{0}(\mathbb{R}^{2n})$ is bounded on $L^{2}(\mathbb{R}^{n})$;
the result follows in view of Theorem \ref{corollashu} (ii).
\end{proof}

We next recall the notion of global hypoellipticity \cite{Shubin,Treves},
which plays an important role in the study of spectral theory for
pseudo-differential operators (see the monograph \cite{boburo96} by Boggiatto
\textit{et al}.). An operator $\widehat{A}:\mathcal{S}^{\prime} (\mathbb{R}%
^{n})\longrightarrow\mathcal{S}^{\prime}(\mathbb{R}^{n})$ which also maps $\mathcal{S}(\mathbb{R}%
^{n})$ into itself is \textit{globally
hypoelliptic} if
\[
\psi\in\mathcal{S}^{\prime}(\mathbb{R}^{n})\text{ and }\widehat{A}\psi
\in\mathcal{S}(\mathbb{R}^{n})\Longrightarrow\psi\in\mathcal{S}(\mathbb{R}%
^{n})\text{.}%
\]
(global hypoellipticity is thus not directly related to usual notion of
hypoellipticity \cite{69}, which is a local notion).

In \cite{Shubin} Shubin introduces the following subclass of $\Gamma_{\rho
}^{m}(\mathbb{R}^{2n})$:

\begin{definition}
Let $m,m_{0}\in\mathbb{R}$ and $0<\rho\leq1$. The symbol class $H\Gamma_{\rho
}^{m,m_{0}}(\mathbb{R}^{2n})$ consists of all complex functions $a\in
C^{\infty}(\mathbb{R}^{2n})$ such that
\[
C_{0}\left\langle z\right\rangle ^{m_{0}}\leq|a(z)|\leq C_{1}\left\langle
z\right\rangle ^{m}\quad {\rm for}\ |z|>R,%
\]
for some $C_0,C_1,R>0$ and whose derivatives satisfy the following property: for every $\alpha
\in\mathbb{N}^{2n}$ there exists $C_{\alpha}>0$ such that%
\[
|\partial_{z}^{\alpha}a(z)|\leq C_{\alpha}|a(z)|\left\langle z\right\rangle
^{-\rho|\alpha|} \quad {\rm for}\ |z|>R.
\]

\end{definition}

The symbol class $H\Gamma_{\rho}^{m,m_{0}}(\mathbb{R}^{2n})$ is insensitive to
perturbations by lower order terms (\cite{Shubin}, Lemma 25.1, (c)):

\begin{lemma}
\label{Lemmalower}Let $a\in H\Gamma_{\rho}^{m,m_{0}}(\mathbb{R}^{2n})$ and
$b\in\Gamma_{\rho}^{m^{\prime}}(\mathbb{R}^{2n})$. If $m^{\prime}<m_0$ then
$a+b\in H\Gamma_{\rho}^{m,m_{0}}(\mathbb{R}^{2n}).$
\end{lemma}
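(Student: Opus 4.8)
The plan is to verify directly that $a+b$ satisfies the two defining conditions of $H\Gamma_{\rho}^{m,m_{0}}(\mathbb{R}^{2n})$: a two-sided bound of the form $C_{0}'\langle z\rangle^{m_{0}}\le|(a+b)(z)|\le C_{1}'\langle z\rangle^{m}$ for large $|z|$, and the derivative estimates $|\partial_{z}^{\alpha}(a+b)(z)|\le C_{\alpha}'|(a+b)(z)|\langle z\rangle^{-\rho|\alpha|}$ for large $|z|$. The whole argument rests on a single observation: since $m'<m_{0}$, the summand $b$ is of strictly lower order than the \emph{lower} bound governing $|a|$, so at infinity $b$ is a negligible perturbation that can never cancel the leading behaviour of $a$. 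Note also that the definition of $H\Gamma_{\rho}^{m,m_{0}}$ forces $m_{0}\le m$ (otherwise the two-sided bound on $|a|$ would fail for large $|z|$), so in particular $m'<m$ as well.

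For the two-sided bound I would argue as follows. The upper estimate is immediate from the triangle inequality: $|a+b|\le|a|+|b|\le C_{1}\langle z\rangle^{m}+C\langle z\rangle^{m'}$, and since $m'\le m$ and $\langle z\rangle\ge1$ the second term is dominated by the first, giving $|a+b|\le C_{1}'\langle z\rangle^{m}$. For the lower estimate write $|a+b|\ge|a|-|b|\ge C_{0}\langle z\rangle^{m_{0}}-C\langle z\rangle^{m'}$. Because $m'<m_{0}$ we have $\langle z\rangle^{m'}=\langle z\rangle^{m'-m_{0}}\langle z\rangle^{m_{0}}$ with $\langle z\rangle^{m'-m_{0}}\to0$, so there exists $R'\ge R$ with $C\langle z\rangle^{m'}\le\tfrac{1}{2}C_{0}\langle z\rangle^{m_{0}}$ for $|z|>R'$; hence $|a+b|\ge\tfrac{1}{2}C_{0}\langle z\rangle^{m_{0}}$ there. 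This is exactly the required lower bound, with a possibly enlarged threshold $R'$ and constant $C_{0}'=\tfrac{1}{2}C_{0}$.

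The derivative estimates are handled by splitting $\partial_{z}^{\alpha}(a+b)=\partial_{z}^{\alpha}a+\partial_{z}^{\alpha}b$ and controlling each piece by $|a+b|$. For the first piece the $H\Gamma$ hypothesis on $a$ gives $|\partial_{z}^{\alpha}a|\le C_{\alpha}|a|\langle z\rangle^{-\rho|\alpha|}$, so it suffices to bound $|a|$ by $|a+b|$; and indeed $|a|\le|a+b|+|b|$ together with $|b|\le C\langle z\rangle^{m'}\le(C/C_{0}')\langle z\rangle^{m'-m_{0}}|a+b|\le(C/C_{0}')|a+b|$ for $|z|>R'$ yields $|a|\le C''|a+b|$. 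For the second piece the $\Gamma_{\rho}^{m'}$ hypothesis gives $|\partial_{z}^{\alpha}b|\le C_{\alpha}\langle z\rangle^{m'-\rho|\alpha|}$, and using the lower bound $\langle z\rangle^{m_{0}}\le(C_{0}')^{-1}|a+b|$ one writes $\langle z\rangle^{m'-\rho|\alpha|}=\langle z\rangle^{m'-m_{0}}\langle z\rangle^{m_{0}-\rho|\alpha|}\le(C_{0}')^{-1}\langle z\rangle^{m'-m_{0}}|a+b|\langle z\rangle^{-\rho|\alpha|}\le(C_{0}')^{-1}|a+b|\langle z\rangle^{-\rho|\alpha|}$, again because $m'-m_{0}<0$ and $\langle z\rangle\ge1$. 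Adding the two contributions produces a constant $C_{\alpha}'$ with $|\partial_{z}^{\alpha}(a+b)|\le C_{\alpha}'|a+b|\langle z\rangle^{-\rho|\alpha|}$ for $|z|>R'$, which is the second condition. There is no genuine obstacle here; the only point requiring care is the uniform enlargement of the threshold to some $R'$ so that $b$ is absorbed into the lower bound for $|a+b|$ simultaneously in all the estimates, and this is precisely where the strict inequality $m'<m_{0}$ is essential.
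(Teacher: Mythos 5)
Your proof is correct. The paper itself gives no argument for this lemma---it simply cites Shubin's Lemma 25.1(c)---and your direct verification (absorbing $b$ into the lower bound for $|a|$ via $m'<m_{0}$, then bounding $|a|$ and $\langle z\rangle^{m'-\rho|\alpha|}$ by constant multiples of $|a+b|$) is exactly the standard argument behind that reference, with the one genuinely necessary point, the uniform enlargement of the threshold $R$ to $R'$, handled explicitly.
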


The interest of these symbol classes comes from the following property
\cite{Shubin}: if $a\in H\Gamma_{\rho}^{m,m_{0}}(\mathbb{R}^{2n})$ then the
Weyl operator $\widehat{A}=\operatorname*{Op}_{\mathrm{W}}(a)$ is globally
hypoelliptic. For instance, the Hermite operator $-\Delta+|x|^{2}$ is globally
hypoelliptic since its Weyl symbol is $a(z)=|z|^{2}$, which is in $H\Gamma
_{1}^{2,2}(\mathbb{R}^{2n})$. Moreover, if $\widehat{A}=\operatorname*{Op}_{\mathrm{W}}(a)$ with $a\in H\Gamma_{\rho}^{m,m_{0}}(\mathbb{R}^{2n})$ the following stronger result holds:%
\[
\psi\in\mathcal{S}^{\prime}(\mathbb{R}^{n})\text{ and }\widehat{A}\psi\in
Q^{s}(\mathbb{R}^{n})\Longrightarrow\psi\in Q^{s+m_0}(\mathbb{R}^{n})\text{.}%
\]

\begin{proposition}
Let $a\in H\Gamma_{\rho}^{m,m_{0}}(\mathbb{R}^{2n})$, with $m-2\rho<m_0$. (i) The Born-Jordan
operator $\widehat{A}_{\mathrm{BJ}}=\operatorname*{Op}_{\mathrm{BJ}}(a)$ is
globally hypoelliptic; (ii) if $\psi$ is a tempered distribution such that
$\widehat{A}_{\mathrm{BJ}}\psi\in Q^{s}(\mathbb{R}^{n})$ for some $s\in\mathbb{R}$ then
$\psi\in Q^{s+m_0}(\mathbb{R}^{n})$.
\end{proposition}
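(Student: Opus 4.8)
The plan is to reduce both statements to the corresponding, already-known results for Weyl operators, by exploiting that within the Shubin classes a Born--Jordan symbol and its Weyl counterpart differ only by a lower-order term. First I would invoke Theorem~\ref{corollashu}~(ii): since $a\in\Gamma_\rho^m(\mathbb{R}^{2n})$, the operator $\widehat{A}_{\mathrm{BJ}}=\operatorname{Op}_{\mathrm{BJ}}(a)$ coincides with a Weyl operator $\operatorname{Op}_{\mathrm{W}}(a_{\mathrm{W}})$ for some $a_{\mathrm{W}}\in\Gamma_\rho^m(\mathbb{R}^{2n})$, and, crucially, the difference satisfies the quantitative gain $a_{\mathrm{W}}-a\in\Gamma_\rho^{m-2\rho}(\mathbb{R}^{2n})$.

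The key step is then to promote $a_{\mathrm{W}}$ back into the hypoelliptic class. Writing $a_{\mathrm{W}}=a+(a_{\mathrm{W}}-a)$, with $a\in H\Gamma_\rho^{m,m_0}(\mathbb{R}^{2n})$ and the remainder $b:=a_{\mathrm{W}}-a\in\Gamma_\rho^{m-2\rho}(\mathbb{R}^{2n})$, the standing assumption $m-2\rho<m_0$ is precisely the hypothesis $m'<m_0$ required to apply Lemma~\ref{Lemmalower}. Hence $a_{\mathrm{W}}\in H\Gamma_\rho^{m,m_0}(\mathbb{R}^{2n})$ as well; in other words, the hypoelliptic class absorbs the Born--Jordan correction exactly because that correction sits two orders below the principal growth.

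Finally I would appeal to Shubin's theory \cite{Shubin} recalled just above: for a Weyl operator whose symbol lies in $H\Gamma_\rho^{m,m_0}(\mathbb{R}^{2n})$ one has both global hypoellipticity and the sharper regularity implication $\psi\in\mathcal{S}'(\mathbb{R}^n),\ \widehat{A}\psi\in Q^s(\mathbb{R}^n)\Rightarrow\psi\in Q^{s+m_0}(\mathbb{R}^n)$. Applying this to $\widehat{A}_{\mathrm{BJ}}=\operatorname{Op}_{\mathrm{W}}(a_{\mathrm{W}})$ yields assertions (i) and (ii) simultaneously. I do not anticipate a genuine obstacle: the entire argument is a reduction, and the one compatibility condition to verify is that the order loss $m-2\rho$ furnished by Theorem~\ref{corollashu} stays below the lower growth exponent $m_0$ of the hypoelliptic class. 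That is exactly the inequality $m-2\rho<m_0$ hardwired into the hypothesis, which is what makes Lemma~\ref{Lemmalower} applicable and thereby drives the whole proof.
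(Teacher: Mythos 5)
Your argument is exactly the paper's proof: reduce to the Weyl operator $\operatorname*{Op}_{\mathrm{W}}(a_{\mathrm{W}})$ via Theorem~\ref{corollashu}~(ii), use $a_{\mathrm{W}}-a\in\Gamma_{\rho}^{m-2\rho}(\mathbb{R}^{2n})$ together with the hypothesis $m-2\rho<m_0$ to apply Lemma~\ref{Lemmalower} and conclude $a_{\mathrm{W}}\in H\Gamma_{\rho}^{m,m_{0}}(\mathbb{R}^{2n})$, then invoke Shubin's results for such Weyl symbols. Correct, and no meaningful difference from the paper's argument.
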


\begin{proof}
In view of the discussion above it suffices to show that the Weyl symbol
$a_{\mathrm{W}}$ of $\widehat{A}_{\mathrm{BJ}}$ belongs to the class
$H\Gamma_{\rho}^{m,m_{0}}(\mathbb{R}^{2n})$. Now, by Theorem \ref{corollashu}
(ii) we have $a_{\mathrm{W}}-a\in\Gamma_{\rho}^{m-2\rho}(\mathbb{R}^{2n})$
hence the result follows using of Lemma \ref{Lemmalower}.
\end{proof}


\end{document}